\documentclass[preprint,12pt]{elsarticle}

\usepackage[utf8]{inputenc}
\usepackage[T1]{fontenc}
\usepackage{amsmath,amssymb,amsthm,mathtools}
\usepackage{enumitem}
\usepackage{array}
\usepackage{hyperref}

\hypersetup{hidelinks}

\theoremstyle{plain}
\newtheorem{theorem}{Theorem}[section]
\newtheorem{lemma}[theorem]{Lemma}
\newtheorem{proposition}[theorem]{Proposition}

\theoremstyle{definition}
\newtheorem{definition}[theorem]{Definition}

\newtheorem{counterexample}[theorem]{Counterexample}

\theoremstyle{remark}
\newtheorem{remark}[theorem]{Remark}

\newcommand{\R}{\mathbb{R}}

\newcommand{\CM}{\mathrm{CM}}
\newcommand{\BF}{\mathrm{BF}}

\makeatletter
\def\ps@pprintTitle{%
  \let\@oddhead\@empty
  \let\@evenhead\@empty
  \let\@oddfoot\@empty
  \let\@evenfoot\@empty}
\makeatother

\begin{document}

\begin{frontmatter}

\title{Riccati Reductions for Modified Bessel Ratios:\\
Bernstein Positivity, Exact Certificates, and Transfer Obstructions}
\author[addr1]{Domingos S. P. Salazar\corref{cor1}}
\address[addr1]{Unidade de Educa\c{c}\~ao a Dist\^ancia e Tecnologia,
Universidade Federal Rural de Pernambuco,
52171-900 Recife, Pernambuco, Brazil}
\cortext[cor1]{Corresponding author.}

\begin{abstract}
Several open inequalities for ratios and logarithmic derivatives of
the modified Bessel functions \(I_\nu\) of the first kind and \(K_\nu\) of
the second kind reduce to sign questions for quadratic Riccati expressions.
We isolate this reduction and use it in two directions. First, for the quotient
\(W_\nu(z)=zI_\nu(z)/I_{\nu+1}(z)\), the canonical product for \(I_{\nu+1}\)
yields the partial fraction
\(W_\nu(\sqrt s)=2(\nu+1)+2\sum_{n\ge1}s/(s+j_{\nu+1,n}^2)\), where
\(j_{\nu+1,n}\) is the \(n\)-th positive zero of \(J_{\nu+1}\).
Consequently \(x\mapsto W_\nu(x^\tau)\) is a Bernstein function for
\(\nu>-1\) and \(0<\tau\le1/2\), and this positive exponent range is sharp.
Second, an exact rational certificate at \((\nu,u)=(0,10)\) places
\(I_1(10)/I_0(10)\) below \(0.949\). This refutes the log-concavity question
of Baricz, Ponnusamy, and Vuorinen for \(u\mapsto \sqrt u\,I_\nu(u)\) and its
displayed Riccati reformulations.
The same framework completes the monotonicity classification of
\(K_\nu'/K_\nu^2\), refutes Baricz--Ponnusamy--Vuorinen Question~7 at
\(\nu=1/2\), and gives an entire counterexample to Baricz's
coefficient-ratio complete-monotonicity transfer problem.
\end{abstract}

\begin{keyword}
modified Bessel functions \sep Bernstein functions \sep completely monotone functions \sep logarithmic concavity \sep Tur\'an-type inequalities \sep Riccati equations
\MSC[2020] 33C10 \sep 26A48 \sep 26A51 \sep 26D07 \sep 44A10 \sep 60G51
\end{keyword}

\end{frontmatter}

\section{Introduction}

Ratios and logarithmic derivatives of \(I_\nu\) and \(K_\nu\), the standard
modified Bessel functions of the first and second kinds of order \(\nu\),
appear throughout analysis and probability. They
control condition numbers in the backward recurrences of Amos \cite{amos},
sharp ratio bounds and Tur\'an-type inequalities
\cite{turan,nasell,simpson-spector,ifantis-siafarikas,laforgia-natalini,
baricz-turan-modified,joshi-bissu,segura,hornik-gruen-2013,
ruiz-antolin-segura,hornik-gruen-2024}, normal distribution functions on
spheres and related probabilistic models \cite{hartman-watson,robert}, and
complete-monotonicity and infinite-divisibility results of Ismail and Grosswald
\cite{ismail,ismail-1990,ismail-kelker,grosswald}. Bernstein functions provide the
probabilistic bridge: they are Laplace exponents of possibly killed subordinators
\cite{schilling-song-vondracek}, so a Bernstein certificate for a Bessel ratio
is a reusable positivity statement for subordinated kernels and for
Bessel-ratio inequalities that enter such kernels. A well-known
survey of open functional inequalities for these functions is the question
list of Baricz, Ponnusamy, and Vuorinen \cite{bpv}, hereafter BPV; a companion
source of open problems is Baricz's study of bounds via coefficient ratios
\cite{baricz-bounds}. Yang and Tian \cite[Conjecture~3]{yang-tian} formulate a
conjecture for the Bernstein-function behavior of \(W_\nu(x^{1/\theta})\)
in the range \(\theta\ge2\); below we state and prove the corresponding
Bernstein-property formulation with \(\tau=1/\theta\).
This sits in a recent line of monotonicity and convexity results for modified
Bessel ratios and products
\cite{yang-zheng-k,yang-zheng-i,segura-2021,yang-chu-k,mao-tian-k,yang-tian}.

The present paper settles a group of these problems with a single organizing
device. The modified Bessel equation converts second-order information about
\(I_\nu\) or \(K_\nu\) into a \emph{quadratic} polynomial in the logarithmic
derivative; every monotonicity or concavity statement treated here is the
assertion that this quadratic has a fixed sign on its domain.
For the overview, write \(W_\nu(z)=zI_\nu(z)/I_{\nu+1}(z)\),
\(r_\nu(u)=I_\nu'(u)/I_\nu(u)\), \(q_\nu(u)=I_{\nu+1}(u)/I_\nu(u)\), and
\(y_\nu(u)=uK_\nu'(u)/K_\nu(u)\); these ratios are defined formally in
Section~\ref{sec:prelim}.

\begin{quote}
\emph{After one substitution through the modified Bessel equation, each claim
below is a sign statement for a quadratic polynomial in a logarithmic
derivative. When the claim is true, the quadratic is controlled globally by a
pole--zero representation; when it is false, it is refuted locally by an exact
rational enclosure at a single point.}
\end{quote}

The paper is organized by mechanism: zero representation, rational certificate,
endpoint classification, and transfer obstruction.

\begin{table}[t]
\centering
\caption{Main mechanisms and source problems.}
\label{tab:mechanisms}
\footnotesize
\begin{tabular}{@{}>{\raggedright\arraybackslash}p{0.40\linewidth}
>{\raggedright\arraybackslash}p{0.27\linewidth}
>{\raggedright\arraybackslash}p{0.24\linewidth}@{}}
\hline
Source claim & Mechanism & Output \\
\hline
\(W_\nu(x^\tau)\) Bernstein, Yang--Tian range \cite{yang-tian} &
Zero partial fraction & Bernstein iff \(0<\tau\le\frac12\) \\
\(\sqrt u\,I_\nu(u)\) log-concavity, BPV Question~1 \cite{bpv} &
Riccati certificate at \((0,10)\) & refuted \\
Riccati reformulation of BPV Question~1 &
same certificate & refuted \\
Quadratic \(I_{\nu+1}/I_\nu\) reformulation &
Riccati algebra & refuted \\
\(K_\nu'/K_\nu^2\), BPV Theorem~2(a) and Question~2 \cite{bpv} &
endpoint signs & classified at \(|\nu|=1\) \\
\(u^2K_\nu'(u)\) on \((0,2)\), BPV Question~7 \cite{bpv} &
half-order closed form & refuted \\
Coefficient-ratio complete-monotonicity transfer, Baricz Problem~4.3(c) \cite{baricz-bounds} &
second-coefficient obstruction & refuted \\
\hline
\end{tabular}
\end{table}

\emph{Bernstein positivity from the zero expansion}
(Section~\ref{sec:bernstein-positivity}). Writing
\(W_\nu(z)=zI_\nu(z)/I_{\nu+1}(z)\), the canonical product of \(I_{\nu+1}\)
over the zeros of \(J_{\nu+1}\) gives the partial fraction
\(W_\nu(\sqrt s)=2(\nu+1)+2\sum_{n\ge1}s/(s+j_{\nu+1,n}^2)\), a Bernstein
function of \(s\). Composition closure then proves the Bernstein-property
formulation corresponding to the Yang--Tian range: \(x\mapsto W_\nu(x^\tau)\)
is Bernstein for every
\(\nu>-1\) and \(0<\tau\le1/2\)
(Theorem~\ref{thm:yang-tian}). A two-line endpoint argument shows the
exponent range is sharp for this Bernstein property
(Proposition~\ref{prop:sharpness}).

\emph{One certificate for BPV Question~1}
(Section~\ref{sec:certificate-cluster}). The Riccati normal form
(Lemma~\ref{lem:riccati-I}) identifies strict log-concavity of
\(\sqrt u\,I_\nu(u)\), the corresponding Riccati inequality, and the
corresponding quadratic lower bound for \(I_{\nu+1}/I_\nu\) as the same sign
statement. A single exact rational enclosure places \(I_1(10)/I_0(10)\)
below \(0.949\) (Lemma~\ref{lem:certificate}) and makes that statement fail
at \((\nu,u)=(0,10)\) with a certified positive margin, refuting
BPV Question~1 and its algebraic reformulations
(Counterexamples~\ref{cx:sqrt-log-concavity}--\ref{cx:quadratic-bound}).

\emph{The Bessel-\(K\) quotient: classification and the endpoint question}
(Section~\ref{sec:besselk}). For \(g_\nu=K_\nu'/K_\nu^2\), BPV proved strict
decrease for \(|\nu|\ge1\) and asked about the remaining range. An endpoint
sign analysis of the same Riccati-type quadratic shows \(g_\nu\) increases
near \(0\) and decreases near \(\infty\) for every \(|\nu|<1\), completing the
monotonicity classification (Theorem~\ref{thm:K-classification}). At the
half-order line the unique critical point is explicit, \((\sqrt2-1)/2\)
(Proposition~\ref{prop:halforder-unimodal}). The half-order closed form also
settles BPV Question~7 negatively: \(u^2K_{1/2}'(u)\) is increasing on
\(((1+\sqrt2)/2,\,2)\) (Counterexample~\ref{cx:question7}).

\emph{The limits of transfer} (Section~\ref{sec:transfer}). Baricz's
coefficient-ratio problem asks whether complete monotonicity of
\(\{a_n/b_n\}\) forces complete monotonicity of
\(\sum a_nx^n/\sum b_nx^n\). A second-coefficient obstruction produces entire
counterexamples with strictly completely monotone---indeed geometric---ratio
sequences (Counterexample~\ref{cx:transfer}). Thus strict complete
monotonicity of the coefficient-ratio sequence alone does not force complete
monotonicity of the quotient.

The recurring technical engines are few: canonical products over Bessel
zeros; the Riccati reduction through the modified Bessel equation; compact
certificates whose arithmetic is isolated in Appendix~\ref{app:certificates};
half-order closed forms as parameter-endpoint probes; and a
second-coefficient obstruction for quotients. Each is isolated as a lemma so
that the applications read uniformly.

\section{Preliminaries}\label{sec:prelim}

We recall the standard vocabulary; see \cite{schilling-song-vondracek} for
Bernstein-function theory, \cite{ince} for the Riccati equation in the
standard logarithmic-derivative sense, and \cite{watson,dlmf} for Bessel
facts.

\begin{definition}[Completely monotone and Bernstein classes]\label{def:classes}
Let all functions below be defined on \((0,\infty)\) and take values in
\([0,\infty)\).
\begin{enumerate}[label=\textup{(\roman*)},leftmargin=2.3em]
\item A function \(f\) is \emph{completely monotone}, written \(f\in\CM\), if
\(f\in C^\infty\) and \((-1)^n f^{(n)}(x)\ge0\) for every \(n\ge0\) and
\(x>0\).
\item A function \(g\) is a \emph{Bernstein function}, written \(g\in\BF\), if
\(g\in C^\infty\) and \(g'\in\CM\).
\end{enumerate}
A sequence \((c_n)_{n\ge0}\) is \emph{completely monotone} if
\(\Delta^kc_n\ge0\) for all \(n,k\ge0\), where \(\Delta c_n=c_n-c_{n+1}\), and
\emph{strictly} so if all these inequalities are strict.
\end{definition}

\noindent
For sequences this is the Hausdorff finite-difference convention for moment
sequences on \([0,1]\) \cite{hausdorff,widder}.

\noindent
By the Bernstein--Widder theorem, \(f\in\CM\) exactly when \(f\) is the
Laplace transform of a positive measure on \([0,\infty)\)
\cite{widder,schilling-song-vondracek}; in this paper only the elementary
sign conditions of Definition~\ref{def:classes} are used.

\noindent
Two standard facts are used repeatedly; both are in
\cite{schilling-song-vondracek}.

\begin{proposition}[Closure properties]\label{prop:closure}
\begin{enumerate}[label=\textup{(\roman*)},leftmargin=2.3em]
\item If \(f,g\in\BF\), then \(f\circ g\in\BF\).
\item \(x\mapsto x^\alpha\) belongs to \(\BF\) if and only if
\(0\le\alpha\le1\).
\end{enumerate}
\end{proposition}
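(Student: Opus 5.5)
For (i), the plan is to use the derivative characterisation of Definition~\ref{def:classes} directly. By the chain rule, \((f\circ g)'=(f'\circ g)\,g'\). Since \(\CM\) is closed under products (Leibniz rule: \((-1)^n(uv)^{(n)}=\sum_k\binom nk(-1)^ku^{(k)}(-1)^{n-k}v^{(n-k)}\ge0\)), it suffices to show that \(f'\circ g\in\CM\) whenever \(f'\in\CM\) and \(g\in\BF\). I will prove the slightly stronger lemma: if \(\phi\in\CM\) and \(g\) is nonnegative with \(g'\in\CM\), then \(\phi\circ g\in\CM\). Nonnegativity of \(g\) keeps the composition inside \((0,\infty)\), or at the boundary where \(\phi\) extends by monotone limits; I will handle \(g\) vanishing at points by noting that a Bernstein function is either identically zero or strictly positive on \((0,\infty)\).

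The lemma goes by induction on \(n\), with the inductive statement ``\((-1)^n(\phi\circ g)^{(n)}\ge0\) for every \(\phi\in\CM\) and every such \(g\).'' The case \(n=0\) is \(\phi\ge0\). For the inductive step, write \(-(\phi\circ g)'=((-\phi')\circ g)\cdot g'\). Here \(-\phi'\in\CM\), so by induction \((-\phi')\circ g\) has derivatives of alternating sign up to order \(n-1\), and \(g'\in\CM\). The Leibniz formula applied to \((-1)^{n-1}\) times the \((n-1)\)-st derivative of this product then gives a sum of nonnegative terms. I expect this bookkeeping to be the main obstacle. The induction must be run simultaneously over all completely monotone \(\phi\), with the inductive hypothesis at order \(n-1\) applied to \(-\phi'\), rather than to a fixed \(\phi\). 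Smoothness of the composition is immediate.

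For (ii), I will compute \(\frac{d}{dx}x^\alpha=\alpha x^{\alpha-1}\) and \(\frac{d^n}{dx^n}x^{\alpha-1}=(\alpha-1)(\alpha-2)\cdots(\alpha-n)x^{\alpha-1-n}\). For \(0\le\alpha\le1\), every factor \(\alpha-k\) with \(k\ge1\) is \(\le0\), so \((-1)^n\) times this derivative, multiplied by \(\alpha\ge0\), is nonnegative; hence \(x^\alpha\in\BF\). For the converse:
\begin{itemize}
\item if \(\alpha<0\), then \(x^\alpha\) is decreasing, so its derivative \(\alpha x^{\alpha-1}<0\) violates \(g'\ge0\);
\item if \(\alpha>1\), then \(g''=\alpha(\alpha-1)x^{\alpha-2}>0\), whereas \(g'\in\CM\) requires \(g''\le0\).
\end{itemize}
This settles (ii).
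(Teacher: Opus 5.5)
Your proof is correct. It differs from the paper only in that the paper gives no argument at all: it records both facts as standard and cites \cite{schilling-song-vondracek}. Your version makes Proposition~\ref{prop:closure} self-contained using nothing beyond the sign conditions of Definition~\ref{def:classes}, which fits the paper's own remark that only those elementary conditions are used.

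The key step is sound. You run the induction simultaneously over all $\phi\in\CM$, apply the order-$k$ hypothesis ($k\le n-1$) to $-\phi'\in\CM$, and pair it with $(-1)^{n-1-k}(g')^{(n-1-k)}\ge0$ in the Leibniz expansion. Your direct computation for (ii), including both directions of the converse, is also correct. The citation additionally gives the L\'evy--Khintchine representation and the characterization of $\BF$ via $e^{-uf}\in\CM$, but the paper never needs either.

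One small cleanup concerns the phrase about extending $\phi$ to the boundary by monotone limits. That extension can fail, since $\phi(0^+)$ may be $+\infty$: take $\phi=f'$ with $f(x)=x^{1/2}$. The chain-rule route then produces $\infty\cdot0$ when $g\equiv0$. Use your dichotomy directly instead:
\begin{itemize}
\item If $g(x_0)=0$, then $g=0$ on $(0,x_0]$.
\item So the nonnegative, nonincreasing $g'$ vanishes on $(0,x_0)$, hence everywhere, and $g\equiv0$.
\item Then $f\circ g$ is the constant $f(0^+)$. This is finite because $f$ is nondecreasing and nonnegative, and constants are Bernstein.
\end{itemize}
This needs the convention $f(0)=f(0^+)$, since $f$ is only defined on $(0,\infty)$. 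In the paper's only application, $g(x)=x^{2\tau}>0$, so the degenerate case never arises.
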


\noindent
The refutation logic below is elementary and we record it once.

\begin{lemma}[One-point obstruction]\label{lem:one-point}
\begin{enumerate}[label=\textup{(\roman*)},leftmargin=2.3em]
\item A completely monotone function is nonincreasing and convex; a single
certified point where \(F'>0\) or \(F''<0\) proves \(F\notin\CM\).
\item Let \(A\in\R\), \(B,C\ge0\), and \(h(r)=A-Br-Cr^2\). Then \(h\) is
nonincreasing on \([0,\infty)\); hence if \(0<r<\rho\) and \(h(\rho)>0\), then
\(h(r)>h(\rho)>0\). If \(A,B,C,\rho\) are rational, the conclusion rests on
one rational inequality.
\end{enumerate}
\end{lemma}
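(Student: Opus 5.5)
Both parts are direct consequences of Definition~\ref{def:classes} and the mean value theorem, so the plan is simply to unpack the sign conditions.

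For part (i), take \(F\in\CM\). The conditions with \(n=1\) and \(n=2\) give \(-F'(x)\ge0\) and \(F''(x)\ge0\) for every \(x>0\). A nonpositive derivative makes \(F\) nonincreasing by the mean value theorem, and a nonnegative second derivative makes \(F'\) nondecreasing, so \(F\) is convex. Consequently, if at some certified point \(x_0\) we have \(F'(x_0)>0\) or \(F''(x_0)<0\), then the \(n=1\) or \(n=2\) condition fails at \(x_0\), and \(F\notin\CM\). The only thing to note is that membership in \(\CM\) is a pointwise requirement at every \(x>0\), so one violating point suffices.

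For part (ii), differentiate to get \(h'(r)=-B-2Cr\). For \(r\ge0\) and \(B,C\ge0\) this is \(\le0\), so \(h\) is nonincreasing on \([0,\infty)\). This gives \(h(r)\ge h(\rho)>0\) whenever \(0<r<\rho\).

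The strict inequality \(h(r)>h(\rho)\) needs more care, because it requires \(h\) to be strictly decreasing on \([r,\rho]\). On that interval \(h'(t)=-B-2Ct<0\) unless \(B=C=0\). In that degenerate case \(h\equiv A\) is constant, and only \(h(r)=h(\rho)>0\) holds, not the strict inequality. So I would either assume \(B+C>0\) explicitly, or observe that every application in the paper has \(C>0\) or \(B>0\). In the nondegenerate case, \(h'<0\) on \((r,\rho)\), and the mean value theorem gives \(h(r)-h(\rho)=-h'(\xi)(\rho-r)>0\) for some \(\xi\in(r,\rho)\). In every case the conclusion \(h(r)>0\) holds, which is all the refutations use. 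Handling this degenerate constant case is the only real obstacle.

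Finally, if \(A,B,C,\rho\) are rational, then \(h(\rho)=A-B\rho-C\rho^2\) is an explicit rational number. Checking \(h(\rho)>0\) is therefore a single exact rational inequality, and the monotonicity argument above transfers this sign to every \(r\in(0,\rho)\). In particular it transfers to an irrational quantity such as a Bessel ratio that has been certified to lie below \(\rho\).
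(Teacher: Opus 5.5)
Your proof is correct and follows the paper's own route: (i) is read off from the $n=1$ and $n=2$ sign conditions in the definition of $\CM$, and (ii) follows from $h'(r)=-B-2Cr\le0$ on $[0,\infty)$. Your additional point is also right. The paper's one-line argument only yields $h(r)\ge h(\rho)$, and the strict inequality $h(r)>h(\rho)$ genuinely needs $B+C>0$, since it fails when $h$ is constant. This does not affect the paper, because the only application, in Theorem~\ref{thm:riccati-positive}, has $B=1/10$ and $C=1$.
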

\begin{proof}
(i) is immediate from the sign conditions defining \(\CM\). (ii) follows from
\(h'(r)=-B-2Cr\le0\) on \([0,\infty)\).
\end{proof}

\subsection{Bessel background}

The functions used throughout are the standard modified Bessel functions.
The function \(I_\nu\) is the modified Bessel function of the first kind of
order \(\nu\), and \(K_\nu\) is the modified Bessel function of the second
kind of order \(\nu\). We use the standard normalization in which
\(K_\nu(u)>0\) for \(u>0\), \(K_{-\nu}=K_\nu\), and
\(K_\nu(u)\sim(\pi/(2u))^{1/2}e^{-u}\) as \(u\to\infty\)
\cite{watson,dlmf}. For \(\nu>-1\) and \(z>0\), \(I_\nu\) has the series
normalization
\begin{equation}\label{eq:I-series}
I_\nu(z)=\sum_{k\ge0}\frac{(z/2)^{2k+\nu}}{k!\,\Gamma(k+\nu+1)},
\end{equation}
and both functions solve the modified Bessel equation
\begin{equation}\label{eq:ode}
u^2w''(u)+uw'(u)-(u^2+\nu^2)w(u)=0.
\end{equation}
We use the recurrence \cite[\S3.71]{watson}
\begin{equation}\label{eq:recurrence}
I_\mu'(z)=I_{\mu-1}(z)-\frac{\mu}{z}I_\mu(z),
\qquad
I_\nu'(z)=I_{\nu+1}(z)+\frac{\nu}{z}I_\nu(z),
\end{equation}
the half-order closed form
\begin{equation}\label{eq:halforder}
K_{1/2}(u)=\sqrt{\frac{\pi}{2u}}\,e^{-u},
\end{equation}
the canonical product over the positive zeros \(j_{\mu,n}\) of the ordinary
Bessel function of the first kind \(J_\mu\) \cite[\S15.41]{watson},
\begin{equation}\label{eq:product}
I_\mu(z)=\frac{(z/2)^\mu}{\Gamma(\mu+1)}
\prod_{n=1}^{\infty}\left(1+\frac{z^2}{j_{\mu,n}^2}\right),
\qquad \mu>-1,
\end{equation}
with \(j_{\mu,n}\sim\pi n\) as \(n\to\infty\), and the large-argument
expansion \cite[\S10.40]{dlmf}
\begin{equation}\label{eq:K-asymptotic}
\frac{K_\nu'(u)}{K_\nu(u)}=-1-\frac{1}{2u}+O(u^{-2}),
\qquad u\to\infty.
\end{equation}
For \(0<\nu<1\) the small-argument behavior is
\(K_\nu(u)\sim2^{\nu-1}\Gamma(\nu)u^{-\nu}\), and for \(\nu=0\),
\(K_0(u)\sim\log(2/u)-\gamma\) with \(K_0'=-K_1\sim-1/u\) \cite[\S10.30]{dlmf}.

Throughout we write, for \(u>0\),
\begin{equation}
r_\nu(u)=\frac{I_\nu'(u)}{I_\nu(u)},
\qquad
q_\nu(u)=\frac{I_{\nu+1}(u)}{I_\nu(u)},
\qquad
y_\nu(u)=\frac{uK_\nu'(u)}{K_\nu(u)},
\end{equation}
so that \(r_\nu=q_\nu+\nu/u\) by \eqref{eq:recurrence}.

\subsection{Riccati normal forms}

The next two lemmas are the reduction announced in the introduction: the
modified Bessel equation \eqref{eq:ode} replaces second derivatives by a
quadratic in the logarithmic derivative.

\begin{lemma}[Riccati \(I_\nu\) normal form]\label{lem:riccati-I}
Let \(\nu\ge0\), and define, for \(u>0\),
\[
g_\nu(u)=\log(\sqrt u\,I_\nu(u)).
\]
Then
\begin{equation}\label{eq:riccati-r}
g_\nu''(u)
=1+\frac{\nu^2-\frac12}{u^2}-\frac{r_\nu(u)}{u}-r_\nu(u)^2
\end{equation}
and, equivalently, in terms of the ratio \(q_\nu\),
\begin{equation}\label{eq:riccati-q}
g_\nu''(u)
=1-q_\nu(u)^2-\frac{(2\nu+1)\,q_\nu(u)}{u}-\frac{\nu+\frac12}{u^2}.
\end{equation}
In particular, the following three statements are equivalent at any fixed
\((\nu,u)\): \(g_\nu''(u)<0\); the Riccati inequality
\(1+(\nu^2-\tfrac12)u^{-2}-r_\nu u^{-1}-r_\nu^2<0\); and the quadratic lower
bound \(q_\nu^2+(2\nu+1)q_\nu u^{-1}+(\nu+\tfrac12)u^{-2}>1\).
\end{lemma}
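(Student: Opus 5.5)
Since \(I_\nu(u)>0\) for \(u>0\), the function \(g_\nu=\tfrac12\log u+\log I_\nu\) is smooth on \((0,\infty)\). The plan is to differentiate twice and then eliminate \(I_\nu''\) with the modified Bessel equation \eqref{eq:ode}. First,
\[
g_\nu'(u)=\frac{1}{2u}+r_\nu(u),\qquad g_\nu''(u)=-\frac{1}{2u^2}+r_\nu'(u).
\]
Differentiating \(r_\nu=I_\nu'/I_\nu\) gives the standard identity
\[
r_\nu'=\frac{I_\nu''}{I_\nu}-r_\nu^2 .
\]
Dividing \eqref{eq:ode} (with \(w=I_\nu\)) by \(u^2I_\nu\) yields
\[
\frac{I_\nu''}{I_\nu}=1+\frac{\nu^2}{u^2}-\frac{r_\nu}{u}.
\]
Substituting this into the expression for \(g_\nu''\) produces exactly \eqref{eq:riccati-r}. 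This is the Riccati step: the second derivative has been replaced by a quadratic in the logarithmic derivative.

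For \eqref{eq:riccati-q}, substitute \(r_\nu=q_\nu+\nu/u\), which follows from the second identity in \eqref{eq:recurrence}, and expand. We have
\[
r_\nu^2=q_\nu^2+\frac{2\nu q_\nu}{u}+\frac{\nu^2}{u^2},
\qquad
\frac{r_\nu}{u}=\frac{q_\nu}{u}+\frac{\nu}{u^2}.
\]
Collecting terms:
\begin{itemize}
\item the \(\nu^2/u^2\) terms cancel;
\item the \(q_\nu/u\) terms combine to \(-(2\nu+1)q_\nu/u\);
\item the remaining \(u^{-2}\) terms give \(-\tfrac12-\nu\), i.e.\ \(-(\nu+\tfrac12)/u^2\).
\end{itemize}
This is \eqref{eq:riccati-q}.

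The three equivalences are then immediate at a fixed \((\nu,u)\). Negativity of \(g_\nu''(u)\) is, by \eqref{eq:riccati-r}, literally the stated Riccati inequality. By \eqref{eq:riccati-q}, it is also the statement \(1<q_\nu^2+(2\nu+1)q_\nu/u+(\nu+\tfrac12)/u^2\), obtained by moving terms across the inequality.

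No step is a genuine obstacle. The only points needing care are the sign bookkeeping in the expansion and the observation that \(I_\nu>0\) on \((0,\infty)\), so that the logarithm and the ratios are defined. Positivity of \(I_\nu\) is visible from the series \eqref{eq:I-series}, since for \(\nu\ge0\) all its terms are positive. The hypothesis \(\nu\ge0\) is used only for this positivity and for the recurrence; the algebra itself is valid for any \(\nu\) where these hold.
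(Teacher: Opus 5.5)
Your proposal is correct and follows the paper's proof essentially step for step. You write \(g_\nu''=r_\nu'-\tfrac{1}{2u^2}\), eliminate \(I_\nu''/I_\nu\) via the modified Bessel equation together with \(r_\nu'=I_\nu''/I_\nu-r_\nu^2\), and substitute \(r_\nu=q_\nu+\nu/u\); your remark on the positivity of \(I_\nu\) is a harmless addition.
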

\begin{proof}
Since \(g_\nu(u)=\tfrac12\log u+\log I_\nu(u)\),
\begin{equation}
g_\nu''(u)=r_\nu'(u)-\frac{1}{2u^2}.
\end{equation}
The equation \eqref{eq:ode} for \(I_\nu\) gives
\begin{equation}
\frac{I_\nu''(u)}{I_\nu(u)}=1+\frac{\nu^2}{u^2}-\frac{r_\nu(u)}{u},
\end{equation}
and \(r_\nu'=I_\nu''/I_\nu-r_\nu^2\), whence \eqref{eq:riccati-r}. For
\eqref{eq:riccati-q}, substitute \(r_\nu=q_\nu+\nu/u\) into
\eqref{eq:riccati-r}:
\begin{equation}
\begin{aligned}
g_\nu''
&=1+\frac{\nu^2-\frac12}{u^2}
-\frac{q_\nu}{u}-\frac{\nu}{u^2}
-q_\nu^2-\frac{2\nu q_\nu}{u}-\frac{\nu^2}{u^2}\\
&=1-q_\nu^2-\frac{(2\nu+1)q_\nu}{u}-\frac{\nu+\frac12}{u^2}.
\end{aligned}
\end{equation}
The three equivalences are then transcriptions of the sign of the same
quantity.
\end{proof}

\begin{lemma}[Riccati normal form for the \(K\)-quotient]\label{lem:riccati-K}
Let \(\nu\ge0\) and \(g_\nu=K_\nu'/K_\nu^2\). Then, for \(u>0\),
\begin{equation}\label{eq:K-quotient-derivative}
g_\nu'(u)=\frac{F_\nu(u)}{u^2K_\nu(u)},
\qquad
F_\nu(u)=u^2+\nu^2-y_\nu(u)-2y_\nu(u)^2.
\end{equation}
Since \(K_\nu>0\), the sign of \(g_\nu'\) is the sign of \(F_\nu\).
\end{lemma}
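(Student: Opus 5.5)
The plan is to differentiate \(g_\nu=K_\nu'/K_\nu^2\) by the quotient rule and then use the modified Bessel equation \eqref{eq:ode} to eliminate \(K_\nu''\). This mirrors the argument for Lemma~\ref{lem:riccati-I}, with \(r_\nu\) replaced by the scaled logarithmic derivative \(y_\nu=uK_\nu'/K_\nu\).

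\textbf{Step 1 (quotient rule).} Since \(K_\nu>0\) on \((0,\infty)\), the function \(g_\nu\) is smooth there. Differentiating \(K_\nu'\cdot K_\nu^{-2}\) gives
\[
g_\nu'=\frac{K_\nu''}{K_\nu^2}-\frac{2(K_\nu')^2}{K_\nu^3}
=\frac{1}{K_\nu}\left(\frac{K_\nu''}{K_\nu}-2\Bigl(\frac{K_\nu'}{K_\nu}\Bigr)^2\right).
\]

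\textbf{Step 2 (eliminate \(K_\nu''\)).} Divide \eqref{eq:ode} for \(w=K_\nu\) by \(u^2K_\nu\). This gives
\[
\frac{K_\nu''}{K_\nu}=1+\frac{\nu^2}{u^2}-\frac{1}{u}\frac{K_\nu'}{K_\nu}
=1+\frac{\nu^2}{u^2}-\frac{y_\nu}{u^2}.
\]
By the definition of \(y_\nu\), we also have \((K_\nu'/K_\nu)^2=y_\nu^2/u^2\).

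\textbf{Step 3 (collect terms).} Substituting both expressions into Step 1 yields
\[
g_\nu'=\frac{1}{K_\nu}\cdot\frac{u^2+\nu^2-y_\nu-2y_\nu^2}{u^2}=\frac{F_\nu}{u^2K_\nu},
\]
which is \eqref{eq:K-quotient-derivative}. The sign claim then follows at once, because \(u^2K_\nu(u)>0\) for \(u>0\) under the stated normalization.

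No step is a real obstacle; the computation is routine. The only point needing care is the bookkeeping of the factor \(u^{-2}\). The first-derivative term \(-K_\nu'/(uK_\nu)\) from the equation must become \(-y_\nu/u^2\), not \(-y_\nu/u\). This is exactly what makes the bracket a pure quadratic in \(y_\nu\) with the uniform prefactor \(u^{-2}\). The hypothesis \(\nu\ge0\) is not needed for the algebra, since \eqref{eq:ode} depends only on \(\nu^2\) and \(K_{-\nu}=K_\nu\); it serves only as a convention.
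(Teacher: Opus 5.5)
Your proposal is correct and follows the paper's own proof: differentiate $K_\nu'/K_\nu^2$, use the modified Bessel equation to replace $K_\nu''/K_\nu$ by $1+\nu^2/u^2-y_\nu/u^2$, and collect $1+\nu^2/u^2-y_\nu/u^2-2y_\nu^2/u^2=F_\nu/u^2$. Your bookkeeping of the $u^{-2}$ factor matches the paper's, and your remark that $\nu\ge0$ only serves as a convention is accurate.
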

\begin{proof}
Differentiating \(g_\nu=K_\nu'/K_\nu^2\),
\begin{equation}
g_\nu'=\frac{K_\nu K_\nu''-2(K_\nu')^2}{K_\nu^3}.
\end{equation}
The equation \eqref{eq:ode} for \(K_\nu\) gives
\(K_\nu''=(1+\nu^2/u^2)K_\nu-K_\nu'/u\), so
\begin{equation}
\frac{K_\nu K_\nu''-2(K_\nu')^2}{K_\nu^2}
=1+\frac{\nu^2}{u^2}-\frac{y_\nu}{u^2}-\frac{2y_\nu^2}{u^2}
=\frac{F_\nu(u)}{u^2}.
\end{equation}
Dividing by \(K_\nu\) gives \eqref{eq:K-quotient-derivative}.
\end{proof}

\section{Bernstein positivity from the zero expansion}
\label{sec:bernstein-positivity}

Yang and Tian \cite{yang-tian} study the ratio
\begin{equation}
W_\nu(z)=\frac{zI_\nu(z)}{I_{\nu+1}(z)},\qquad \nu>-1,\ z>0,
\end{equation}
and propose a conjecture for the Bernstein-function behavior of
\(W_\nu(x^{1/\theta})\) in the range \(\theta\ge2\)
\cite[Conjecture~3]{yang-tian}. With \(\tau=1/\theta\), the
Bernstein-property formulation treated here asks whether
\(x\mapsto W_\nu(x^\tau)\) belongs to \(\BF\) for \(0<\tau\le1/2\). By
\eqref{eq:recurrence}, \(W_\nu(z)=zr_{\nu+1}(z)+(\nu+1)\) is an affine shift
of the logarithmic derivative of \(I_{\nu+1}\), so the question is a
Bernstein-positivity statement for the same object that drives the
refutations of Section~\ref{sec:certificate-cluster}.

\begin{proposition}[Zero partial fraction]\label{prop:partial-fraction}
For every \(\nu>-1\), with \(j_{\nu+1,n}\) the positive zeros of
\(J_{\nu+1}\), the function \(G_\nu(s)=W_\nu(\sqrt s)\) satisfies
\begin{equation}\label{eq:G-partial-fraction}
G_\nu(s)=2(\nu+1)+2\sum_{n=1}^{\infty}\frac{s}{s+j_{\nu+1,n}^2},
\qquad
G_\nu'(s)=2\sum_{n=1}^{\infty}\frac{j_{\nu+1,n}^2}{(s+j_{\nu+1,n}^2)^2},
\end{equation}
for \(s>0\). Consequently \(G_\nu\in\BF\) on \((0,\infty)\).
\end{proposition}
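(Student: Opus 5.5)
The plan is to take the logarithmic derivative of the canonical product \eqref{eq:product} with \(\mu=\nu+1>0\), rewrite it in the variable \(s=z^2\), and then read off the Bernstein property termwise.

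\emph{Step 1 (log-derivative).} Since \(\nu+1>0\), the product \eqref{eq:product} applies to \(I_{\nu+1}\). Because \(\sum_n j_{\nu+1,n}^{-2}<\infty\) (as \(j_{\nu+1,n}\sim\pi n\)), the product converges locally uniformly, and logarithmic differentiation termwise is legitimate. This gives, for \(z>0\),
\[
\frac{zI_{\nu+1}'(z)}{I_{\nu+1}(z)}=(\nu+1)+\sum_{n=1}^{\infty}\frac{2z^2}{z^2+j_{\nu+1,n}^2}.
\]

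\emph{Step 2 (recurrence).} The first identity in \eqref{eq:recurrence} with \(\mu=\nu+1\) gives \(zI_{\nu+1}'=zI_\nu-(\nu+1)I_{\nu+1}\). Hence \(W_\nu(z)=zI_{\nu+1}'(z)/I_{\nu+1}(z)+(\nu+1)\), which is the affine identity \(W_\nu=zr_{\nu+1}+(\nu+1)\) already noted in the text. Adding \(\nu+1\) to the sum from Step~1 and setting \(z=\sqrt s\) yields the first formula in \eqref{eq:G-partial-fraction}.

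\emph{Step 3 (derivative).} Each summand \(s/(s+j^2)\) has derivative \(j^2/(s+j^2)^2\), and the resulting series is dominated by \(\sum_n j_{\nu+1,n}^{-2}\) uniformly on \(s\ge0\). The same domination holds for the \(k\)-th derivatives, which have the form \(c_k j^2/(s+j^2)^{k+2}\) and are bounded by \(c_k j^{-2k-2}\). So termwise differentiation of all orders is justified and \(G_\nu\in C^\infty\).

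\emph{Step 4 (Bernstein property).} Each function \(s\mapsto j^2/(s+j^2)^2\) is completely monotone, since its \(k\)-th derivative is \((-1)^k(k+1)!\,j^2(s+j^2)^{-k-2}\). The class \(\CM\) is closed under positive sums, so \(G_\nu'\in\CM\). Moreover \(G_\nu\ge2(\nu+1)>0\), so \(G_\nu\) takes values in \([0,\infty)\). Therefore \(G_\nu\in\BF\).

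The only delicate point is justifying termwise logarithmic differentiation of the infinite product and interchanging sum and derivatives. Both reduce to the summability \(\sum_n j_{\nu+1,n}^{-2}<\infty\), which follows from the stated asymptotic \(j_{\nu+1,n}\sim\pi n\).
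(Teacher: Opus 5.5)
Your proposal is correct and follows the paper's proof essentially step by step: logarithmic differentiation of the canonical product for \(I_{\nu+1}\), the recurrence shift \(W_\nu=zr_{\nu+1}+(\nu+1)\), termwise differentiation, and complete monotonicity of each kernel \(j^2/(s+j^2)^2\). Your uniform bound \(c_k\,j_{\nu+1,n}^{-2k-2}\) on \(s\ge0\) and the explicit check \(G_\nu\ge2(\nu+1)>0\) are small refinements of the paper's appeal to locally uniform convergence.
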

\begin{proof}
Put \(\mu=\nu+1>0\). Logarithmic differentiation of the canonical product
\eqref{eq:product}, justified locally uniformly because
\(j_{\mu,n}\sim\pi n\) makes \(\sum_n j_{\mu,n}^{-2}\) finite, gives
\begin{equation}
z\frac{I_\mu'(z)}{I_\mu(z)}
=\mu+2\sum_{n=1}^{\infty}\frac{z^2}{z^2+j_{\mu,n}^2}.
\end{equation}
The recurrence \eqref{eq:recurrence} then yields
\begin{equation}
W_\nu(z)=z\frac{I_{\mu-1}(z)}{I_\mu(z)}
=z\frac{I_\mu'(z)}{I_\mu(z)}+\mu
=2\mu+2\sum_{n=1}^{\infty}\frac{z^2}{z^2+j_{\mu,n}^2}.
\end{equation}
Substituting \(s=z^2\) gives the first display in
\eqref{eq:G-partial-fraction}. Termwise differentiation is justified by the
same locally uniform convergence, giving the second display. For every
\(k\ge0\) and every \(n\),
\begin{equation}
(-1)^k\frac{d^k}{ds^k}\,
\frac{j_{\nu+1,n}^2}{(s+j_{\nu+1,n}^2)^2}
=(k+1)!\,\frac{j_{\nu+1,n}^2}{(s+j_{\nu+1,n}^2)^{k+2}}\ge0,
\end{equation}
and the locally uniformly convergent sum preserves these inequalities. Hence
\(G_\nu'\in\CM\) and \(G_\nu\in\BF\).
\end{proof}

\begin{theorem}[Power-Bernstein theorem for \(W_\nu\)]\label{thm:yang-tian}
For every \(\nu>-1\) and every \(\tau\in(0,1/2]\), the function
\(x\mapsto W_\nu(x^\tau)\) is a Bernstein function on \((0,\infty)\).
\end{theorem}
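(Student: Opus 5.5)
The plan is to write \(W_\nu(x^\tau)\) as a composition of two Bernstein functions and invoke the closure property in Proposition~\ref{prop:closure}(i). Since \(x^\tau=\sqrt{x^{2\tau}}\), we have
\[
W_\nu(x^\tau)=G_\nu\bigl(x^{2\tau}\bigr),\qquad x>0,
\]
where \(G_\nu(s)=W_\nu(\sqrt s)\) is the function of Proposition~\ref{prop:partial-fraction}. That proposition gives \(G_\nu\in\BF\) for every \(\nu>-1\). Its proof, based on the canonical product \eqref{eq:product} and the recurrence \eqref{eq:recurrence}, is already complete, so nothing further about Bessel functions is needed.

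For the inner function, the hypothesis \(0<\tau\le1/2\) gives \(\alpha:=2\tau\in(0,1]\). By Proposition~\ref{prop:closure}(ii), the map \(x\mapsto x^{2\tau}\) is then a Bernstein function on \((0,\infty)\). It takes values in \((0,\infty)\), which is exactly the domain on which \(G_\nu\) is defined and Bernstein, so the composition makes sense with no boundary issues. Proposition~\ref{prop:closure}(i) then yields \(G_\nu\circ(x\mapsto x^{2\tau})\in\BF\), which is the claim.

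The main point to check is that the hypotheses of the closure statement are literally met. Definition~\ref{def:classes} requires Bernstein functions to take values in \([0,\infty)\). For \(G_\nu\), the partial fraction \eqref{eq:G-partial-fraction} shows \(G_\nu(s)\ge2(\nu+1)>0\), since \(\nu>-1\). For the power function, \(x^{2\tau}>0\). Both functions are \(C^\infty\) on \((0,\infty)\).

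If one prefers not to rely on the composition theorem as a black box, a direct route works for the derivative. Write
\[
\frac{d}{dx}G_\nu(x^{2\tau})=G_\nu'(x^{2\tau})\cdot 2\tau x^{2\tau-1}.
\]
The factor \(G_\nu'\circ(\text{Bernstein})\) is \(\CM\), because a \(\CM\) function composed with a Bernstein function is \(\CM\). The factor \(2\tau x^{2\tau-1}\) is \(\CM\) because \(2\tau-1\le0\). Products of \(\CM\) functions are \(\CM\). This fallback also uses only standard facts from \cite{schilling-song-vondracek}.
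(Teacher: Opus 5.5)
Your proposal is correct and follows the paper's own proof: write \(W_\nu(x^\tau)=G_\nu(x^{2\tau})\), take \(G_\nu\in\BF\) from Proposition~\ref{prop:partial-fraction}, take \(x\mapsto x^{2\tau}\in\BF\) from Proposition~\ref{prop:closure}(ii) since \(0<2\tau\le1\), and conclude by composition closure, Proposition~\ref{prop:closure}(i). Your extra checks of the value ranges and your derivative-based fallback are both sound, but neither is needed.
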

\begin{proof}
Write \(W_\nu(x^\tau)=G_\nu(x^{2\tau})\) with \(G_\nu\) as in
Proposition~\ref{prop:partial-fraction}. Since \(0<2\tau\le1\), the power
\(x\mapsto x^{2\tau}\) is a Bernstein function by
Proposition~\ref{prop:closure}(ii), and \(G_\nu\in\BF\) by
Proposition~\ref{prop:partial-fraction}. Composition closure,
Proposition~\ref{prop:closure}(i), concludes.
\end{proof}

\begin{proposition}[Sharpness of the exponent range]\label{prop:sharpness}
For every \(\nu>-1\) and every \(\tau>1/2\), the function
\(x\mapsto W_\nu(x^\tau)\) is not a Bernstein function on \((0,\infty)\).
\end{proposition}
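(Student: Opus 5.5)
The plan is to exploit the small-\(x\) behavior of \(f(x)=W_\nu(x^\tau)=G_\nu(x^{2\tau})\). A Bernstein function has a completely monotone derivative, so \(f'\) must be nonincreasing on \((0,\infty)\). Together with positivity and the behavior of \(f'\) near \(0\), this rules out \(2\tau>1\). Set \(\alpha=2\tau>1\) and \(c=G_\nu'(0^+)=2\sum_{n\ge1} j_{\nu+1,n}^{-2}\). This value is finite and strictly positive, since \(j_{\nu+1,n}\sim\pi n\).

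\textbf{Step 1.} Show that \(G_\nu'\) is continuous on \([0,\infty)\) with \(G_\nu'(0)=c>0\). This follows from the second display in Proposition~\ref{prop:partial-fraction}. Each summand \(j^2/(s+j^2)^2\) is bounded by \(j^{-2}\) uniformly for \(s\ge0\), so the series converges uniformly on \([0,\infty)\) by dominated convergence.

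\textbf{Step 2.} Compute the derivative of \(f\) by the chain rule:
\[
f'(x)=\alpha x^{\alpha-1}G_\nu'(x^\alpha).
\]
As \(x\to0^+\), Step 1 gives \(f'(x)\sim \alpha c\,x^{\alpha-1}\to0\), because \(\alpha>1\). On the other hand, \(f'(x)>0\) for every \(x>0\), since \(G_\nu'>0\).

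\textbf{Step 3.} Conclude by contradiction. If \(f\in\BF\), then \(f'\in\CM\), and by Lemma~\ref{lem:one-point}(i) \(f'\) is nonincreasing. Fix any \(x_0>0\). For \(0<x<x_0\) we would need \(f'(x)\ge f'(x_0)>0\), so \(\liminf_{x\to0^+}f'(x)\ge f'(x_0)>0\). This contradicts \(f'(x)\to0\) from Step 2.

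Equivalently, one can exhibit a point with \(f''>0\), which violates the convexity-type sign condition applied to \(f'\). Indeed, \(f'(0^+)=0<f'(x_0)\) forces \(f''>0\) somewhere in \((0,x_0)\) by the mean value theorem.

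The only delicate point is Step 1: the finiteness and positivity of \(G_\nu'(0^+)\), together with the uniformity of the series limit at \(s=0\). These are handled by the bound \(j_{\nu+1,n}^{-2}\) and the convergence of \(\sum_n j_{\nu+1,n}^{-2}\) noted in the proof of Proposition~\ref{prop:partial-fraction}. The argument is uniform in \(\nu>-1\), since \(\nu+1>0\) ensures the canonical product \eqref{eq:product} applies.
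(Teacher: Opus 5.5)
Your proposal is correct and is essentially the paper's own argument: with \(\sigma=2\tau>1\), the finite positive limit \(G_\nu'(0^+)=2\sum_n j_{\nu+1,n}^{-2}\) forces \(f'(x)=\sigma x^{\sigma-1}G_\nu'(x^\sigma)\to0\) while \(f'>0\) on \((0,\infty)\), and this is incompatible with \(f'\) being nonincreasing, as Lemma~\ref{lem:one-point}(i) requires. Your uniform bound by \(j_{\nu+1,n}^{-2}\) is really the Weierstrass M-test rather than dominated convergence, but either one justifies passing to the limit at \(s=0\).
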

\begin{proof}
Let \(\sigma=2\tau>1\) and \(F(x)=G_\nu(x^{\sigma})\), so
\(F'(x)=\sigma x^{\sigma-1}G_\nu'(x^{\sigma})\). By \eqref{eq:G-partial-fraction}, \(G_\nu'\) has the finite right limit
\begin{equation}
G_\nu'(0^+)=2\sum_{n=1}^{\infty}j_{\nu+1,n}^{-2}\in(0,\infty),
\end{equation}
where finiteness and passage to the limit follow from
\(j_{\nu+1,n}\sim\pi n\) and dominated convergence. Hence
\(F'(x)\sim\sigma G_\nu'(0^+)\,x^{\sigma-1}\to0\) as \(x\downarrow0\),
while \(F'(x)>0\) for every \(x>0\). If \(F'\) were completely monotone, then
it would be nonincreasing by Lemma~\ref{lem:one-point}(i). The finite
right-limit \(F'(0^+)=0\) would then force \(F'(x)\le0\) for all \(x>0\),
contrary to the displayed positivity. Hence \(F'\notin\CM\), and
\(F\notin\BF\).
\end{proof}

\begin{remark}[Source problem]
Theorem~\ref{thm:yang-tian} proves the Bernstein-property formulation
corresponding to the Yang--Tian range \cite[Conjecture~3]{yang-tian}, and
Proposition~\ref{prop:sharpness} shows that range is exactly the Bernstein
regime: \(W_\nu(x^\tau)\in\BF\) if and only if \(0<\tau\le1/2\). The
mechanism is the first half of the organizing thesis: the logarithmic
derivative of \(I_{\nu+1}\) has a partial-fraction expansion over the squared
zeros \(j_{\nu+1,n}^2\), and this expansion exhibits \(G_\nu'\) as a positive
superposition of completely monotone kernels. The representation carries the
whole sign pattern directly.
\end{remark}

\section{One certificate for log-concavity}
\label{sec:certificate-cluster}

Question~1 of Baricz, Ponnusamy, and Vuorinen (BPV) asks whether
\(u\mapsto\sqrt u\,I_\nu(u)\) is strictly log-concave on \((0,\infty)\) for
every \(\nu\ge0\) \cite[Question~1]{bpv}. Lemma~\ref{lem:riccati-I}
identifies this question with the sign of the Riccati expression
\eqref{eq:riccati-r}, and with the corresponding quadratic lower bound for
the ratio \(q_\nu=I_{\nu+1}/I_\nu\). A later coefficient-based treatment
discusses log-concavity statements for \(t^\mu I_\nu(t)\), including the
square-root case \((\nu,\mu)=(0,1/2)\) \cite[Theorem~4]{nanthanasub}. The
certificate below gives an exact obstruction at \(\nu=0\). Thus the single
certificate stated in Lemma~\ref{lem:certificate}, with the arithmetic
isolated in Appendix~\ref{app:certificates}, refutes the BPV source question
and the two algebraic reformulations displayed below.

\begin{lemma}[Rational certificate at \(u=10\)]\label{lem:certificate}
\begin{equation}
r_0(10)=q_0(10)=\frac{I_1(10)}{I_0(10)}<\frac{9487}{10000}.
\end{equation}
\end{lemma}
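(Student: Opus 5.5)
The plan is to certify the inequality with two positive-term truncations of the series \eqref{eq:I-series}, so that every step is an exact rational computation. At \(z=10\) we have \((z/2)^2=25\), hence
\[
I_0(10)=\sum_{k\ge0}\frac{25^k}{(k!)^2},\qquad
I_1(10)=5\sum_{k\ge0}\frac{25^k}{k!\,(k+1)!}.
\]
All terms are positive rationals. Therefore any partial sum \(S_0^{(N)}=\sum_{k=0}^{N}25^k/(k!)^2\) is a rigorous lower bound for \(I_0(10)\). The identity \(r_0=q_0\) needs no separate argument: it is the case \(\nu=0\) of \(r_\nu=q_\nu+\nu/u\), which follows from \eqref{eq:recurrence}.

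For the numerator we need an upper bound. Write \(b_k=25^k/(k!(k+1)!)\). The consecutive ratio is
\[
\frac{b_{k+1}}{b_k}=\frac{25}{(k+1)(k+2)},
\]
which is decreasing in \(k\). For \(k\ge N+1\) it is therefore at most \(\rho_N=25/((N+2)(N+3))\), and \(\rho_N<1\) once \(N\ge3\). This gives the geometric tail bound
\[
\sum_{k>N}b_k\le \frac{b_{N+1}}{1-\rho_N},
\]
and hence
\[
I_1(10)\le 5\Bigl(S_1^{(N)}+\frac{b_{N+1}}{1-\rho_N}\Bigr),\qquad S_1^{(N)}=\sum_{k=0}^{N}b_k .
\]
The certificate then reduces to the single rational inequality
\[
10000\cdot 5\Bigl(S_1^{(N)}+\frac{b_{N+1}}{1-\rho_N}\Bigr)<9487\,S_0^{(N)},
\]
which I would verify by exact integer arithmetic after clearing denominators. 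I would place this verification in Appendix~\ref{app:certificates}, as the paper announces.

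The main obstacle is the size of the margin, not the method. Numerically \(I_1(10)/I_0(10)\approx0.94860\), so the slack below \(0.9487\) is only about \(10^{-4}\) in relative terms. The truncation error in both series must therefore be pushed well below \(10^{-5}\) relative to \(I_0(10)\approx 2.8\times10^3\). The terms peak near \(k=5\) at size of order \(10^2\) to \(10^3\), and they fall below \(10^{-6}\) only around \(k\approx 20\). So I would take \(N\) between \(20\) and \(25\), with \(N=24\) as a safe choice. That makes the numbers in the final inequality large integers, since the denominators are products of factorials up to \(25!\). It is still a finite and checkable computation, best presented as two explicit reduced fractions together with a one-line comparison.

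As a fallback, if a more compact certificate were wanted, I would replace the tail bound with a Perron continued-fraction upper bound for \(I_1/I_0\) truncated at a modest depth. I expect the series route to suffice, and it uses only \eqref{eq:I-series}.
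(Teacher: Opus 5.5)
Your plan is correct and is essentially the paper's own argument. Proposition~\ref{prop:certificate-arithmetic} bounds \(I_0(10)\) below by the partial sum through \(k=20\), bounds \(I_1(10)\) above by the partial sum through \(k=20\) plus the geometric tail with ratio \(25/(22\cdot23)=25/506\) (exactly your \(\rho_{20}\)), and records the exact positive value of \(9487L_0-10000U_1\). Your smallest cutoff \(N=20\) therefore already works, since the omitted tails are below \(10^{-10}\) against a relative margin of about \(10^{-4}\); taking \(N=24\) only enlarges the integers you must compare.
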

\begin{proof}
This is the first assertion of Proposition~\ref{prop:certificate-arithmetic}.
The appendix contains the finite rational enclosure and its exact verification.
No floating-point approximation enters the certificate.
\end{proof}

\begin{theorem}[The Riccati expression is positive at \((0,10)\)]
\label{thm:riccati-positive}
With \(g_0(u)=\log(\sqrt u\,I_0(u))\),
\begin{equation}
g_0''(10)
=\frac{199}{200}-\frac{r_0(10)}{10}-r_0(10)^2
\ge\frac{9831}{100000000}>0.
\end{equation}
\end{theorem}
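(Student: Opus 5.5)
The plan has three steps: reduce to an explicit quadratic in \(r_0(10)\), use monotonicity to push the bound from Lemma~\ref{lem:certificate} into that quadratic, and finish with one rational inequality.

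\emph{Step 1: the identity.} Specialize Lemma~\ref{lem:riccati-I}, equation \eqref{eq:riccati-r}, to \(\nu=0\) and \(u=10\). The constant term is
\[
1+\frac{0-\frac12}{100}=1-\frac{1}{200}=\frac{199}{200},
\]
and the remaining terms are \(-r_0(10)/10-r_0(10)^2\). This gives the displayed identity directly. Since \(I_0'=I_1\) by \eqref{eq:recurrence} with \(\nu=0\), we also have \(r_0=q_0\), as recorded in Lemma~\ref{lem:certificate}.

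\emph{Step 2: monotonicity.} Set
\[
h(r)=\frac{199}{200}-\frac{r}{10}-r^2 .
\]
This is Lemma~\ref{lem:one-point}(ii) with \(A=199/200\), \(B=1/10\), \(C=1\), so \(h\) is nonincreasing on \([0,\infty)\). We need \(0<r_0(10)\). This holds because \(I_0(10)\) and \(I_1(10)\) are positive; every term of the series \eqref{eq:I-series} is positive. Lemma~\ref{lem:certificate} gives \(r_0(10)<\rho:=9487/10000\). Hence
\[
g_0''(10)=h(r_0(10))\ge h(\rho).
\]
The inequality is in fact strict, since \(h\) is strictly decreasing on \([0,\infty)\).

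\emph{Step 3: the rational evaluation.} Compute \(h(9487/10000)\) over the denominator \(10^8\):
\[
\frac{199}{200}=\frac{99500000}{10^8},\qquad \frac{\rho}{10}=\frac{9487000}{10^8},\qquad \rho^2=\frac{90003169}{10^8}.
\]
Here \(9487^2=90003169\). Therefore
\[
h(\rho)=\frac{99500000-9487000-90003169}{10^8}=\frac{9831}{10^8}>0 .
\]
This matches the stated bound exactly, so the argument closes with the inequality \(\ge 9831/10^8\).

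The only substantive input is the enclosure in Lemma~\ref{lem:certificate}, which is assumed from the appendix. Within this proof the one step that needs care is the exact squaring of \(9487\): the margin is only about \(10^{-4}\), so that arithmetic must be done exactly, not approximately.
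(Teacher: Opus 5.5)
Your proof is correct and follows the paper's argument essentially step for step: you specialize \eqref{eq:riccati-r} at \((\nu,u)=(0,10)\), combine the monotonicity of \(h\) from Lemma~\ref{lem:one-point}(ii) with \(0<r_0(10)<9487/10000\) from Lemma~\ref{lem:certificate}, and evaluate \(h(9487/10000)=9831/10^8\) exactly. Your explicit check \(9487^2=90003169\) is the rational evaluation that the paper defers to Proposition~\ref{prop:certificate-arithmetic}(ii).
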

\begin{proof}
The first equality is \eqref{eq:riccati-r} at \(\nu=0\), \(u=10\). The map
\(h(r)=\tfrac{199}{200}-\tfrac{r}{10}-r^2\) is nonincreasing for \(r\ge0\)
by Lemma~\ref{lem:one-point}(ii), and \(0<r_0(10)<9487/10000\) by
Lemma~\ref{lem:certificate}, so
\begin{equation}
g_0''(10)=h(r_0(10))>h\!\left(\frac{9487}{10000}\right)
=\frac{9831}{10^8},
\end{equation}
where the last rational evaluation is
Proposition~\ref{prop:certificate-arithmetic}(ii). This proves the claim.
\end{proof}

\begin{counterexample}[Square-root log-concavity fails]
\label{cx:sqrt-log-concavity}
The statement ``for every \(\nu\ge0\), the function
\(u\mapsto\sqrt u\,I_\nu(u)\) is strictly log-concave on \((0,\infty)\)''
is false. It fails at \(\nu=0\), \(u=10\), and, by continuity, on an open set
of parameters \((\nu,u)\) with \(\nu>0\).
\end{counterexample}
\begin{proof}
Strict log-concavity at \(u\) is \(g_\nu''(u)<0\), and
Theorem~\ref{thm:riccati-positive} certifies \(g_0''(10)>0\). For the open
set: locally uniform convergence of the defining series for \(I_\nu\) and
\(\partial_u I_\nu\) on a neighborhood of \(\nu=0\) contained in
\((-1,\infty)\), together with positivity of \(I_\nu(10)\), gives continuity
of \(\nu\mapsto r_\nu(10)\), so
\(\nu\mapsto g_\nu''(10)=1+(\nu^2-\tfrac12)10^{-2}-r_\nu(10)/10-r_\nu(10)^2\)
is continuous and positive at \(\nu=0\) with certified lower bound
\(9831\times10^{-8}\); hence \(g_\nu''(10)>0\) for all \(\nu\) in some
interval \([0,\nu_0)\), and \(g_\nu''>0\) persists on a neighborhood of
\(u=10\) for each such \(\nu\).
\end{proof}

\begin{counterexample}[Riccati log-concavity inequality fails]
\label{cx:riccati-inequality}
The inequality
\begin{equation}
1+\frac{\nu^2-\frac12}{u^2}-\frac{r_\nu(u)}{u}-r_\nu(u)^2<0
\end{equation}
does not hold for every \(\nu\ge0\) and \(u>0\): it fails at
\((\nu,u)=(0,10)\).
\end{counterexample}
\begin{proof}
By Lemma~\ref{lem:riccati-I}, the displayed expression equals
\(g_\nu''(u)\); apply Theorem~\ref{thm:riccati-positive}.
\end{proof}

\begin{counterexample}[Quadratic ratio lower bound fails]
\label{cx:quadratic-bound}
The claimed bound
\begin{equation}
q_\nu(u)^2+\frac{(2\nu+1)\,q_\nu(u)}{u}+\frac{\nu+\frac12}{u^2}>1
\end{equation}
does not hold for every \(\nu\ge0\) and \(u>0\). At \((\nu,u)=(0,10)\),
\begin{equation}
q_0(10)^2+\frac{q_0(10)}{10}+\frac{1}{200}
\le1-\frac{9831}{10^8}<1.
\end{equation}
\end{counterexample}
\begin{proof}
By \eqref{eq:riccati-q}, the left side equals \(1-g_\nu''(u)\) at every
\((\nu,u)\); at \((0,10)\), Theorem~\ref{thm:riccati-positive} gives
\(1-g_0''(10)\le1-9831\times10^{-8}<1\).
\end{proof}

\begin{remark}[Source problems and orientation values]
Counterexamples~\ref{cx:sqrt-log-concavity}--\ref{cx:quadratic-bound} answer
BPV Question~1 and its algebraic reformulations \cite{bpv}. They also rule out
any all-domain square-root log-concavity interpretation of
\cite[Theorem~4]{nanthanasub} at the parameter pair \((\nu,\mu)=(0,1/2)\).
The exact certificate is isolated in
Appendix~\ref{app:certificates}. The orientation values
\eqref{eq:orientation-decimals} show that the obstruction is small, which
explains why floating-point exploration can misclassify this region.
\end{remark}

\section{The Bessel-\texorpdfstring{\(K\)}{K} quotient: classification and
the endpoint question}
\label{sec:besselk}

BPV prove that \(u\mapsto K_\nu'(u)/K_\nu(u)^2\) is strictly decreasing on
\((0,\infty)\) when \(|\nu|\ge1\) \cite[Theorem~2(a)]{bpv}, and BPV
Question~2 asks what happens on the remaining range \(|\nu|<1\). For recent
adjacent monotonicity and complete-monotonicity results for functions built
from modified Bessel \(K\)-ratios and products, see
\cite{yang-zheng-k,yang-chu-k,segura-2021,mao-tian-k}. The Riccati normal form of
Lemma~\ref{lem:riccati-K} reduces the question to the endpoint signs of
\(F_\nu\), which are decided by the classical expansions.

\begin{proposition}[Endpoint signs in the open range]\label{prop:K-nonmonotone}
For every \(|\nu|<1\), the derivative of
\(g_\nu(u)=K_\nu'(u)/K_\nu(u)^2\) is positive for all sufficiently small
\(u>0\) and negative for all sufficiently large \(u\). Consequently
\(g_\nu\) has no global monotonicity on \((0,\infty)\).
\end{proposition}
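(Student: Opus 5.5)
By Lemma~\ref{lem:riccati-K}, the sign of \(g_\nu'\) is the sign of \(F_\nu(u)=u^2+\nu^2-y_\nu(u)-2y_\nu(u)^2\). So it suffices to determine the limiting behaviour of \(y_\nu(u)=uK_\nu'(u)/K_\nu(u)\) as \(u\downarrow0\) and as \(u\to\infty\). Since \(K_{-\nu}=K_\nu\), we may assume \(0\le\nu<1\), and we treat \(\nu=0\) separately at the small-\(u\) end.

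\emph{Large \(u\).} By \eqref{eq:K-asymptotic}, \(y_\nu(u)=-u-\tfrac12+O(u^{-1})\). Hence
\[
y_\nu(u)^2=u^2+u+\tfrac14+O(1),
\]
and I must track the \(O(1)\) term. Expanding,
\[
F_\nu=u^2+\nu^2+u+\tfrac12-2u^2-2u-\tfrac12+O(1)=-u^2-u+O(1).
\]
This tends to \(-\infty\), so \(g_\nu'<0\) for all large \(u\). The leading \(-u^2\) already decides the sign, so the crude \(O(u^{-2})\) remainder suffices, uniformly for every real \(\nu\).

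\emph{Small \(u\), \(0<\nu<1\).} From \(K_\nu'=-K_{\nu-1}-(\nu/u)K_\nu\) (the \(K\)-analogue of \eqref{eq:recurrence}) we get \(y_\nu=-\nu-uK_{\nu-1}/K_\nu=-\nu-uK_{1-\nu}/K_\nu\). The small-argument forms \(K_{1-\nu}(u)\sim2^{-\nu}\Gamma(1-\nu)u^{\nu-1}\) and \(K_\nu(u)\sim2^{\nu-1}\Gamma(\nu)u^{-\nu}\) give
\[
uK_{1-\nu}/K_\nu\sim c\,u^{2\nu}\to0,
\]
so \(y_\nu(0^+)=-\nu\). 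Then
\[
F_\nu(0^+)=\nu^2+\nu-2\nu^2=\nu(1-\nu)>0.
\]
This is exactly where \(|\nu|<1\) enters: for \(\nu\ge1\) the limit is \(\le0\), consistent with BPV Theorem~2(a).

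\emph{Small \(u\), \(\nu=0\).} Here \(y_0=-uK_1/K_0\). Using \(K_1\sim1/u\) and \(K_0\sim\log(2/u)-\gamma\), we get \(y_0\sim-1/\log(1/u)\to0^-\). Hence
\[
F_0=u^2-y_0-2y_0^2=|y_0|(1-2|y_0|)+u^2>0
\]
for small \(u\). The limit is \(0\), so the sign must come from the first-order term, and it does: \(-y_0>0\) dominates \(2y_0^2\).

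\emph{Conclusion and main obstacle.} With \(g_\nu'>0\) near \(0\) and \(g_\nu'<0\) near \(\infty\), \(g_\nu\) is neither nondecreasing nor nonincreasing on \((0,\infty)\). I expect the main care point to be the small-\(u\) analysis. At \(\nu=0\) the limit \(F_0(0^+)\) vanishes, so positivity rests on the logarithmic rate. For \(0<\nu<1\) one must confirm that the correction \(c\,u^{2\nu}\) is \(o(1)\) and does not affect the sign. Both follow from the cited \cite[\S10.30]{dlmf} expansions, which I would quote for \(K_{1-\nu}\) as well.
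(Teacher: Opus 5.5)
Your proof is correct and follows the paper's argument essentially step for step. Both reduce to the sign of $F_\nu$ via the Riccati normal form, use the limit $y_\nu\to-\nu$ to get $F_\nu(0^+)=\nu(1-\nu)>0$, treat $\nu=0$ separately through the logarithmic rate $y_0\to0^-$, and use the expansion $F_\nu=-u^2-u+O(1)$ at infinity. Your one small deviation is to obtain $y_\nu(0^+)=-\nu$ from the recurrence $K_\nu'=-K_{\nu-1}-(\nu/u)K_\nu$ together with the small-argument form of $K_{1-\nu}$, rather than reading it off $K_\nu\sim2^{\nu-1}\Gamma(\nu)u^{-\nu}$ alone. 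That is a mild gain in rigor, since it avoids implicitly differentiating an asymptotic relation.
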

\begin{proof}
Since \(K_{-\nu}=K_\nu\), it suffices to treat \(0\le\nu<1\). By
Lemma~\ref{lem:riccati-K}, the sign of \(g_\nu'\) is the sign of
\(F_\nu(u)=u^2+\nu^2-y_\nu(u)-2y_\nu(u)^2\).

\emph{Small \(u\), case \(0<\nu<1\).} The expansion
\(K_\nu(u)\sim2^{\nu-1}\Gamma(\nu)u^{-\nu}\) gives
\(y_\nu(u)\to-\nu\) as \(u\downarrow0\), hence
\begin{equation}
F_\nu(u)\to\nu^2+\nu-2\nu^2=\nu(1-\nu)>0,
\end{equation}
so \(g_\nu'>0\) near \(0\).

\emph{Small \(u\), case \(\nu=0\).} From
\(K_0(u)\sim\log(2/u)-\gamma\) and \(K_0'=-K_1\sim-1/u\),
\begin{equation}
y_0(u)\sim-\frac{1}{\log(2/u)-\gamma}\longrightarrow0^-.
\end{equation}
Writing \(a(u)=-y_0(u)>0\), we have \(a(u)<1/2\) for all small \(u\), so
\begin{equation}
F_0(u)=u^2+a(u)\bigl(1-2a(u)\bigr)>0,
\end{equation}
and again \(g_0'>0\) near \(0\).

\emph{Large \(u\).} The expansion \eqref{eq:K-asymptotic} gives
\(y_\nu(u)=-u-\tfrac12+O(u^{-1})\), so
\begin{equation}
F_\nu(u)=u^2+\nu^2-y_\nu(u)-2y_\nu(u)^2=-u^2-u+\nu^2+O(1)<0
\end{equation}
for all sufficiently large \(u\). The derivative has opposite signs near the
two endpoints, so \(g_\nu\) is neither nondecreasing nor nonincreasing.
Taking \(a>0\) in the small-\(u\) region and \(b>a\) in the large-\(u\)
region, the maximum of \(g_\nu\) on \([a,b]\) is attained at an interior
point. Hence every open-range quotient has at least one interior local
maximum.
\end{proof}

\begin{proposition}[Half-order quotient is unimodal]
\label{prop:halforder-unimodal}
At \(\nu=1/2\), the quotient \(g(u)=K_{1/2}'(u)/K_{1/2}(u)^2\) satisfies
\(g'(u)>0\) on \(\bigl(0,\tfrac{\sqrt2-1}{2}\bigr)\) and \(g'(u)<0\) on
\(\bigl(\tfrac{\sqrt2-1}{2},\infty\bigr)\); its unique critical point
\(u=(\sqrt2-1)/2\) is a strict global maximum.
\end{proposition}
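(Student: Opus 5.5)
The plan is to use the half-order closed form \eqref{eq:halforder} to make the Riccati quantity \(F_{1/2}\) of Lemma~\ref{lem:riccati-K} completely explicit, and then read off its sign.

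From \(K_{1/2}(u)=\sqrt{\pi/(2u)}\,e^{-u}\), logarithmic differentiation gives
\(K_{1/2}'/K_{1/2}=-1-\tfrac{1}{2u}\). Hence
\[
y_{1/2}(u)=-u-\tfrac12 .
\]
Substituting into \(F_\nu=u^2+\nu^2-y_\nu-2y_\nu^2\) with \(\nu=1/2\) gives
\[
F_{1/2}(u)=u^2+\tfrac14+u+\tfrac12-2\bigl(u+\tfrac12\bigr)^2=-u^2-u+\tfrac14 .
\]
By Lemma~\ref{lem:riccati-K}, \(g'(u)\) has the sign of \(F_{1/2}(u)\), because \(u^2K_{1/2}(u)>0\).

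The quadratic \(-u^2-u+\tfrac14\) has roots \(u=\tfrac{-1\pm\sqrt2}{2}\). Its only positive root is \(u_*=(\sqrt2-1)/2\). The quadratic is positive on \((0,u_*)\), since it equals \(\tfrac14>0\) at \(u=0\), and it is negative on \((u_*,\infty)\).

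So \(g\) is strictly increasing on \((0,u_*]\) and strictly decreasing on \([u_*,\infty)\). Therefore \(u_*\) is the unique critical point and a strict global maximum of \(g\) on \((0,\infty)\).

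There is no real obstacle here. The only care needed is checking the arithmetic of \(F_{1/2}\). As a cross-check, the result is consistent with the endpoint limits in Proposition~\ref{prop:K-nonmonotone}: \(F_{1/2}(0^+)=\nu(1-\nu)=\tfrac14\), and \(F_{1/2}(u)\sim -u^2-u\) for large \(u\).

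A direct verification is also available. Writing \(g=-\sqrt{2/\pi}\,(u^{1/2}+\tfrac12u^{-1/2})e^{u}\) and differentiating recovers the same quadratic factor, up to a negative multiple of \(u^{-3/2}e^{u}\).
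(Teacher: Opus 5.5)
Your proof is correct and is essentially the paper's argument: both use the closed form \eqref{eq:halforder} to reduce the sign of \(g'\) to the quadratic \(u^2+u-\tfrac14\). The only difference is cosmetic. You substitute \(y_{1/2}=-u-\tfrac12\) into \(F_{1/2}\) from Lemma~\ref{lem:riccati-K}, whereas the paper differentiates the explicit expression for \(g\) directly; the two agree because \(F_{1/2}(u)/(u^2K_{1/2}(u))=-\sqrt{2/\pi}\,e^{u}u^{-3/2}(u^2+u-\tfrac14)\).
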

\begin{proof}
Put \(C=\sqrt{\pi/2}\). By \eqref{eq:halforder},
\(K_{1/2}'(u)/K_{1/2}(u)=-1-1/(2u)\), so
\begin{equation}
g(u)=\frac{K_{1/2}'(u)/K_{1/2}(u)}{K_{1/2}(u)}
=-C^{-1}e^{u}\left(u^{1/2}+\tfrac12u^{-1/2}\right).
\end{equation}
Differentiating,
\begin{equation}
g'(u)
=-C^{-1}e^{u}\left(u^{1/2}+u^{-1/2}-\tfrac14u^{-3/2}\right)
=-C^{-1}e^{u}u^{-3/2}\left(u^2+u-\tfrac14\right).
\end{equation}
The quadratic \(u^2+u-\tfrac14\) has the single positive root
\(r=(\sqrt2-1)/2\), is negative on \((0,r)\), and positive on
\((r,\infty)\); since the prefactor \(-C^{-1}e^{u}u^{-3/2}\) is strictly
negative, the stated signs of \(g'\) follow.
\end{proof}

\begin{theorem}[Full monotonicity classification]\label{thm:K-classification}
Let \(g_\nu(u)=K_\nu'(u)/K_\nu(u)^2\). Then \(g_\nu\) is strictly decreasing on
\((0,\infty)\) if and only if \(|\nu|\ge1\). For \(|\nu|\ge1\) this is BPV
Theorem~2(a) \cite{bpv}; for \(|\nu|<1\), \(g_\nu\) increases near \(0\)
and decreases near \(\infty\), hence is nonmonotone.
\end{theorem}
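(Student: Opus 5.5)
The theorem is essentially an assembly of results already in hand, so the plan is to split along the threshold \(|\nu|=1\) and prove the two directions of the equivalence separately. Because \(K_{-\nu}=K_\nu\), the quotient \(g_\nu\) is unchanged under \(\nu\mapsto-\nu\), and it suffices to treat \(\nu\ge0\). This also matches the hypothesis \(\nu\ge0\) of Lemma~\ref{lem:riccati-K}.

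\emph{The \(|\nu|\ge1\) direction.} Strict decrease for \(|\nu|\ge1\) is BPV Theorem~2(a) \cite{bpv}, and we invoke it directly. If a self-contained argument is wanted, we would instead prove \(F_\nu(u)=u^2+\nu^2-y_\nu-2y_\nu^2<0\) through Lemma~\ref{lem:riccati-K}. For this we would use the standard bound \(-y_\nu(u)>\sqrt{u^2+\nu^2}\), whose square gives \(2y_\nu^2>2(u^2+\nu^2)\). Combined with \(-y_\nu<2y_\nu^2\), which holds because \(|y_\nu|\ge\nu\ge1>\tfrac12\), this yields
\[
F_\nu(u)<u^2+\nu^2-y_\nu-2(u^2+\nu^2)=-(u^2+\nu^2)-y_\nu .
\]
It remains to show that the right side is nonpositive, i.e. \(-y_\nu\le u^2+\nu^2\). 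This is where the self-contained route becomes delicate, and it is the reason to rely on the citation. The inequality \(-y_\nu\le u^2+\nu^2\) does follow from the companion upper bound \(-y_\nu\le \tfrac12+\sqrt{\nu^2+u^2}\) whenever \(\nu\ge1\), but that upper bound is itself a cited Turán-type fact. We therefore take the cited theorem as the proof of this direction.

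\emph{The \(|\nu|<1\) direction.} This is exactly Proposition~\ref{prop:K-nonmonotone}. For each \(0\le\nu<1\), \(F_\nu\) is positive near \(0\) and negative near \(\infty\), so \(g_\nu'\) takes both signs. In particular \(g_\nu\) is not strictly decreasing, and in fact it is not monotone at all. The half-order case, Proposition~\ref{prop:halforder-unimodal}, serves as an explicit sanity check of this behavior.

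Combining the two directions gives the equivalence: strict decrease holds precisely when \(|\nu|\ge1\). The only step requiring genuine analysis beyond the preceding results is the \(|\nu|\ge1\) direction, and this is discharged by citation. Everything else is a one-line assembly of Proposition~\ref{prop:K-nonmonotone} with the reflection \(K_{-\nu}=K_\nu\).
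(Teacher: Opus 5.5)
Your proposal is correct and matches the paper's proof: cite \cite[Theorem~2(a)]{bpv} for \(|\nu|\ge1\), invoke Proposition~\ref{prop:K-nonmonotone} for \(|\nu|<1\), and reduce to \(\nu\ge0\) via \(K_{-\nu}=K_\nu\). You were right not to rely on your self-contained sketch, which fails as written. The inequality it needs, \(-y_\nu\le u^2+\nu^2\), is false at \(\nu=1\) for small \(u\), because \(-y_1(u)=1+uK_0(u)/K_1(u)=1+u^2\log(2/u)+O(u^2)\). Also, your claimed implication from \(-y_\nu\le\frac12+\sqrt{\nu^2+u^2}\) holds only when \(\sqrt{\nu^2+u^2}\ge(1+\sqrt3)/2\), not for all \(\nu\ge1\).
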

\begin{proof}
Combine the source theorem \cite[Theorem~2(a)]{bpv} for \(|\nu|\ge1\) with
Proposition~\ref{prop:K-nonmonotone} for \(|\nu|<1\); the symmetry
\(K_{-\nu}=K_\nu\) reduces everything to \(\nu\ge0\).
\end{proof}

\begin{remark}[Source problem]
Theorem~\ref{thm:K-classification} closes the monotonicity question left
open in \cite{bpv} for the quotient \(K_\nu'/K_\nu^2\): the source's
decreasing range \(|\nu|\ge1\) is exactly the monotone range.
Proposition~\ref{prop:halforder-unimodal} sharpens the picture on the
half-order line, where the unique interior maximum is explicit. The mechanism is
again the Riccati normal form: monotonicity is the universal negativity of
the quadratic \(F_\nu\) in \(y_\nu\), and the small-\(u\) limit
\(y_\nu\to-\nu\) turns the left endpoint into the parabola
\(\nu(1-\nu)\), positive precisely on the open range \(0<\nu<1\).
\end{remark}

We now turn to BPV Question~7, which asks whether
\(u\mapsto u^2K_\nu'(u)\) is strictly decreasing on \((0,2)\) for every
\(|\nu|\le1/2\) \cite[Question~7]{bpv}, after the source proves the corresponding statement on
\((0,1)\).

\begin{counterexample}[Endpoint failure for Baricz--Ponnusamy--Vuorinen Question~7]
\label{cx:question7}
At \(\nu=1/2\), the derivative of \(u\mapsto u^2K_\nu'(u)\) is positive on
\(\bigl(\tfrac{1+\sqrt2}{2},\,2\bigr)\). Thus the claimed strict decrease on
\((0,2)\) fails at the endpoint order, and BPV Question~7
\cite[Question~7]{bpv} has a negative answer.
\end{counterexample}
\begin{proof}
Put \(c=\sqrt{\pi/2}\), so that \(K_{1/2}(u)=c\,u^{-1/2}e^{-u}\) by
\eqref{eq:halforder}. Differentiating,
\begin{equation}
\begin{aligned}
K_{1/2}'(u)
&=-\,c\,e^{-u}\left(u^{-1/2}+\tfrac12u^{-3/2}\right),\\
u^2K_{1/2}'(u)
&=-c\,e^{-u}\left(u^{3/2}+\tfrac12u^{1/2}\right).
\end{aligned}
\end{equation}
Differentiating once more,
\begin{equation}
\frac{d}{du}\left\{u^2K_{1/2}'(u)\right\}
=c\,e^{-u}u^{-1/2}\left(u^2-u-\tfrac14\right).
\end{equation}
The quadratic \(u^2-u-\tfrac14\) has positive root \((1+\sqrt2)/2<2\) and is
positive beyond it, so \(u^2K_{1/2}'(u)\) is strictly increasing on
\(((1+\sqrt2)/2,\,2)\) and cannot be strictly decreasing on \((0,2)\). The
order \(\nu=1/2\) lies in the admissible set \(|\nu|\le1/2\), so the
universal claim fails.
\end{proof}

\begin{remark}[Source problem]
The refutation lives at the parameter endpoint \(\nu=1/2\), where \(K_\nu\)
degenerates to the elementary closed form \eqref{eq:halforder} and the sign
analysis becomes a quadratic in \(u\). The two positive roots
\((\sqrt2\pm1)/2\) appearing in Proposition~\ref{prop:halforder-unimodal}
and Counterexample~\ref{cx:question7} are the two branches of the same
half-order Riccati algebra. The source's interval \((0,1)\) is safe because
\((1+\sqrt2)/2>1\); the extension to \((0,2)\) crosses the root and fails.
\end{remark}

\section{The limits of transfer: coefficient ratios do not control quotients}
\label{sec:transfer}

The positive results of Section~\ref{sec:bernstein-positivity} depend on the
zero structure of \(I_\nu\). Taylor-coefficient positivity alone is
insufficient, as shown by Baricz's transfer problem
\cite[Problem~4.3(c)]{baricz-bounds}: if
\(f(x)=\sum_{n\ge0}a_nx^n\) and \(g(x)=\sum_{n\ge0}b_nx^n\) converge on
\((-r,r)\), with \(b_n>0\), and the ratio sequence
\(\{a_n/b_n\}_{n\ge0}\) is strictly completely monotone, respectively
completely monotone, must \(f/g\) be strictly completely monotone,
respectively completely monotone, on \((0,r)\)? The motivation is visible in
\eqref{eq:I-series}: many normalized Bessel quotients can be written as
quotients of entire power series whose coefficient ratios are Hausdorff
moment sequences \cite{hausdorff}. An affirmative transfer principle would
generate ratio complete-monotonicity theorems by inspection. The question
strictly strengthens the classical Biernacki--Krzy\.z monotonicity criterion
for quotients of power series with monotone coefficient ratios
\cite{biernacki-krzyz}.

\begin{proposition}[Second-coefficient obstruction]\label{prop:second-coeff}
Let \(g(x)=1+px+qx^2+O(x^3)\) be analytic at \(0\), let \(0<t<1\), and set
\(f(x)=g(tx)\). Then
\begin{equation}
\frac{f(x)}{g(x)}
=1+p(t-1)x+(1-t)\left\{p^2-q(1+t)\right\}x^2+O(x^3).
\end{equation}
In particular, if \(p^2<q(1+t)\), then \((f/g)''(0)<0\), and \(f/g\) is not
completely monotone on any interval \((0,\varepsilon)\).
\end{proposition}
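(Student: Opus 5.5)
The expansion will come from a direct Taylor computation, and the consequence from the one-point obstruction of Lemma~\ref{lem:one-point}(i), applied through a limit at the endpoint \(0\).

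\emph{Step 1: expansion.} Since \(f(x)=g(tx)=1+ptx+qt^2x^2+O(x^3)\) and \(g(0)=1\neq0\), the quotient \(f/g\) is analytic near \(0\). Write \(1/g(x)=1-px+(p^2-q)x^2+O(x^3)\), obtained from the geometric expansion of \(1/(1+w)\) with \(w=px+qx^2+O(x^3)\). Multiplying, the coefficient of \(x\) is \(pt-p=p(t-1)\). The coefficient of \(x^2\) is
\[
qt^2-p^2t+p^2-q=(1-t)p^2-q(1-t^2)=(1-t)\bigl\{p^2-q(1+t)\bigr\},
\]
which gives the displayed expansion.

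\emph{Step 2: sign of the second derivative.} Reading off the second Taylor coefficient gives \((f/g)''(0)=2(1-t)\{p^2-q(1+t)\}\). This is strictly negative when \(p^2<q(1+t)\), because \(1-t>0\).

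\emph{Step 3: failure of complete monotonicity.} The function \((f/g)''\) is continuous on a neighborhood of \(0\), since \(f/g\) is analytic there. It is therefore negative on some interval \([0,\delta)\). For any \(\varepsilon>0\), pick a point \(x_0\in(0,\min(\varepsilon,\delta))\); there \((f/g)''(x_0)<0\). A completely monotone function on \((0,\varepsilon)\) is convex, by Lemma~\ref{lem:one-point}(i) applied on that interval. So \(f/g\) cannot be completely monotone on \((0,\varepsilon)\).

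The only point needing care is that the hypothesis "analytic at \(0\)" gives \((f/g)''\) on a two-sided neighborhood, so the sign at \(0\) propagates into \((0,\varepsilon)\). No real obstacle is expected beyond bookkeeping the \(x^2\) coefficient correctly.
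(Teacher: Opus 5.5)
Your proposal is correct and follows the paper's proof essentially step for step: the same product of \(f\) with the geometric expansion of \(1/g\), the same factorization \((1-t)\{p^2-q(1+t)\}\) of the \(x^2\) coefficient, and the same continuity-plus-convexity argument via Lemma~\ref{lem:one-point}(i). Your explicit choice of \(x_0\in(0,\min(\varepsilon,\delta))\) is a slightly more careful version of the paper's ``on some \((0,\varepsilon)\)'' step, since it handles the ``any interval'' quantifier directly.
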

\begin{proof}
With \(f(x)=1+ptx+qt^2x^2+O(x^3)\) and
\(1/g(x)=1-px+(p^2-q)x^2+O(x^3)\),
\begin{equation}
\frac fg=1+p(t-1)x+\left\{qt^2-p^2t+p^2-q\right\}x^2+O(x^3),
\end{equation}
and \(qt^2-q+p^2(1-t)=(1-t)\{p^2-q(1+t)\}\). If this coefficient is
negative, then \((f/g)''(0)=2(1-t)\{p^2-q(1+t)\}<0\), and by continuity
\((f/g)''<0\) on some \((0,\varepsilon)\). A completely monotone function is
convex (Lemma~\ref{lem:one-point}(i)), so \(f/g\notin\CM\) there.
\end{proof}

\begin{counterexample}[Coefficient-ratio \(\CM\) transfer is false]
\label{cx:transfer}
Define
\begin{equation}
g(x)=1+\frac x4+x^2+\sum_{n\ge3}\frac{x^n}{n!},
\qquad
f(x)=g(x/2).
\end{equation}
Then \(f,g\) are entire, all coefficients \(b_n\) of \(g\) are positive, and
the ratio sequence \(a_n/b_n=2^{-n}\) is strictly completely monotone; yet
\(f/g\) is not completely monotone on any interval \((0,\varepsilon)\). Hence
the transfer problem of \cite{baricz-bounds} has a negative answer.
\end{counterexample}
\begin{proof}
The coefficients of \(g\) are \(b_0=1\), \(b_1=1/4\), \(b_2=1\), and
\(b_n=1/n!\) for \(n\ge3\), all positive; \(g\) differs from \(e^x\) by a
polynomial, so \(f\) and \(g\) are entire. Since \(f(x)=g(x/2)\), the
coefficients satisfy \(a_n=2^{-n}b_n\), so \(c_n=a_n/b_n=2^{-n}\) and
\begin{equation}
\Delta^kc_n=\left(1-\tfrac12\right)^k2^{-n}=2^{-n-k}>0
\qquad\text{for all }n,k\ge0,
\end{equation}
a strictly completely monotone sequence (indeed the Hausdorff moment
sequence \cite{hausdorff} of the point mass at \(1/2\)). Applying
Proposition~\ref{prop:second-coeff} with \(p=1/4\), \(q=1\), \(t=1/2\),
\begin{equation}
[x^2]\frac fg=\frac12\left(\frac1{16}-\frac32\right)=-\frac{23}{32},
\qquad
\left(\frac fg\right)''(0)=-\frac{23}{16}<0,
\end{equation}
so \(f/g\) is not completely monotone on any right neighborhood of \(0\).
\end{proof}

\begin{remark}[Source problem]
Counterexample~\ref{cx:transfer} gives a negative answer to Baricz's
coefficient-ratio problem \cite[Problem~4.3(c)]{baricz-bounds}. The ratio sequence here is
as strong as sequence-level hypotheses get---geometric, hence a Hausdorff
moment sequence with strict complete monotonicity of every order---and the
quotient still fails at the second Taylor coefficient. The obstruction
isolates why the Bernstein positivity of
Section~\ref{sec:bernstein-positivity} runs through canonical products over
zeros. Coefficientwise positivity alone is insufficient for the required
complete-monotonicity pattern.
\end{remark}

\section{Concluding remarks}

The tally is one Yang--Tian Bernstein-property formulation proved with a sharp
exponent range
(Theorem~\ref{thm:yang-tian} and Proposition~\ref{prop:sharpness}), one BPV
monotonicity classification completed (Theorem~\ref{thm:K-classification},
with the explicit half-order maximum of
Proposition~\ref{prop:halforder-unimodal}), and three source problems answered
negatively: BPV Question~1, BPV Question~7, and Baricz Problem~4.3(c)
(Counterexamples~\ref{cx:sqrt-log-concavity}--\ref{cx:quadratic-bound},
\ref{cx:question7}, and~\ref{cx:transfer}). All items are statements about
logarithmic derivatives of modified Bessel functions, and all proofs pass
through the same Riccati normal forms
(Lemmas~\ref{lem:riccati-I} and~\ref{lem:riccati-K}).

The division of labor is the one announced in the introduction. On the true
side, the partial-fraction expansion over squared Bessel zeros
(Proposition~\ref{prop:partial-fraction}) carries the entire sign pattern,
and composition closure converts it into the stated Bernstein property.
On the false side, the quadratic normal form concentrates each claim into
the value of a ratio at one point, and Appendix~\ref{app:certificates}
supplies the finite rational certificate: truncated positive series with
geometric tails, compared as integers. The certified positive margin at
\((\nu,u)=(0,10)\) is small. This explains why these problems sit close to
the boundary of truth: the refuted inequalities hold on large parts of the
parameter space and fail by less than \(3\times10^{-4}\) at the certified
obstruction.

Three threads seem worth pursuing. First, the exact log-concavity region of
\(\sqrt u\,I_\nu(u)\) in the \((\nu,u)\)-plane is a natural next object: the
certificate above excludes a neighborhood of \((0,10)\), and the boundary is
the zero set of the Riccati quadratic in Lemma~\ref{lem:riccati-I}. Second,
the endpoint-sign method of
Section~\ref{sec:besselk} should classify the remaining monotonicity windows
for \(K_\nu\)-quotients in the BPV list, with the half-order line as the
canonical probe. Third, Counterexample~\ref{cx:transfer} invites a
characterization question: which additional structure on the coefficient
ratio or denominator coefficients restores the transfer? Any positive answer
would interpolate between the coefficient-level and zero-level mechanisms
that this paper keeps deliberately separate.

\appendix
\renewcommand{\thetheorem}{\Alph{section}.\arabic{theorem}}

\section{Exact rational certificates}\label{app:certificates}

This appendix contains the finite rational arithmetic invoked in
Lemma~\ref{lem:certificate} and Theorem~\ref{thm:riccati-positive}.

\begin{lemma}[Geometric tail enclosure]\label{lem:tail}
Let \(t_k>0\) and suppose \(t_{k+1}/t_k\le q<1\) for all \(k\ge N+1\). Then
\begin{equation}\label{eq:tail-enclosure}
\sum_{k=0}^{N}t_k<\sum_{k=0}^{\infty}t_k
\le\sum_{k=0}^{N}t_k+\frac{t_{N+1}}{1-q}.
\end{equation}
If the \(t_k\) and \(q\) are rational, both enclosing bounds are explicit
rationals.
\end{lemma}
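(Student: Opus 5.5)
The plan is to handle the two inequalities separately; both are elementary comparisons. For the left inequality I use positivity alone: the full series equals the partial sum $\sum_{k=0}^N t_k$ plus the tail $\sum_{k\ge N+1}t_k$. The tail contains the term $t_{N+1}>0$ and every other term is nonnegative, so it is strictly positive, which gives strict inequality. Convergence of the full series follows from the right-hand bound, so I prove that one first, or at least establish finiteness of the tail before using it.

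For the right inequality I bound the tail term by term by a geometric series. Induction on $m\ge0$ starting at $k=N+1$ uses the ratio hypothesis $t_{k+1}\le q\,t_k$, which holds for all $k\ge N+1$. It yields
\[
t_{N+1+m}\le q^m\,t_{N+1}.
\]
The base case $m=0$ is an equality. The inductive step multiplies the induction hypothesis by $q\ge0$, after applying $t_{N+2+m}\le q\,t_{N+1+m}$. Since $0\le q<1$ (and $q>0$ is implied whenever some ratio is positive, though $q\ge0$ suffices), comparison with the convergent geometric series gives
\[
\sum_{k\ge N+1}t_k\le t_{N+1}\sum_{m\ge0}q^m=\frac{t_{N+1}}{1-q}.
\]
This shows the series converges, because its partial sums are monotone and bounded. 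Adding the finite partial sum gives the upper bound in \eqref{eq:tail-enclosure}.

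The rationality remark is immediate. The lower bound is a finite sum of rationals, and the upper bound adds $t_{N+1}/(1-q)$, which is rational because $t_{N+1}$ and $q$ are rational with $1-q\neq0$. There is no real obstacle here; the only point needing care is the index bookkeeping. The ratio hypothesis is used only from index $N+1$ onward, so the induction must be anchored at $t_{N+1}$ and not at $t_0$.
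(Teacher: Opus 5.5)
Your proposal is correct and matches the paper's proof: the lower bound comes from positivity of the omitted terms, and the upper bound from the induction \(t_{N+1+j}\le t_{N+1}q^{j}\) anchored at \(N+1\) followed by summing the geometric series. Your explicit note that this bound also gives convergence is a small addition the paper leaves implicit. Also, \(q>0\) holds automatically here, since every ratio \(t_{k+1}/t_k\) is positive.
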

\begin{proof}
The lower bound is positivity of the omitted terms. For the upper bound,
induction gives \(t_{N+1+j}\le t_{N+1}q^{j}\) for \(j\ge0\), and the geometric
series sums to \(t_{N+1}/(1-q)\).
\end{proof}

\begin{proposition}[Exact arithmetic package]\label{prop:certificate-arithmetic}
The following two rational statements hold:
\begin{enumerate}[label=\textup{(\roman*)},leftmargin=2.3em]
\item \(I_1(10)/I_0(10)<9487/10000\).
\item If \(h(r)=199/200-r/10-r^2\), then
\(h(9487/10000)=9831/10^8\).
\end{enumerate}
\end{proposition}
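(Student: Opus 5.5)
Part (ii) is a one-line exact evaluation. Part (i) is a finite comparison of truncated positive series, with Lemma~\ref{lem:tail} controlling the omitted tail, so the whole certificate reduces to one integer inequality.

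For (ii) I would simply expand. With \(r=9487/10000\) we have \(r^2=90003169/10^8\) and \(r/10=9487000/10^8\), while \(199/200=99500000/10^8\). Hence
\[
h(r)=\frac{99500000-9487000-90003169}{10^8}=\frac{9831}{10^8}.
\]
This is pure integer arithmetic with no estimation.

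For (i) I would use the series \eqref{eq:I-series} at \(z=10\):
\[
I_0(10)=\sum_{k\ge0}\frac{25^k}{(k!)^2},\qquad
I_1(10)=5\sum_{k\ge0}\frac{25^k}{k!\,(k+1)!}.
\]
It suffices to find rationals \(L\le I_0(10)\) and \(U\ge I_1(10)\) with \(10000\,U<9487\,L\).
\begin{itemize}
\item For the lower bound, Lemma~\ref{lem:tail} gives \(L=\sum_{k=0}^{N}25^k/(k!)^2\), by positivity of the omitted terms.
\item For the upper bound, the terms \(t_k=25^k/(k!(k+1)!)\) satisfy \(t_{k+1}/t_k=25/((k+1)(k+2))\). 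This ratio is decreasing in \(k\), so for \(k\ge N+1\) it is at most \(q=25/((N+2)(N+3))<1\) once \(N\ge3\).
\item Lemma~\ref{lem:tail} then yields \(U=5\bigl(\sum_{k=0}^{N}t_k+t_{N+1}/(1-q)\bigr)\).
\end{itemize}

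The true ratio is about \(0.94860\), so the relative slack against \(0.9487\) is about \(10^{-4}\). The terms \(25^k/(k!)^2\) peak near \(k=5\), with \(I_0(10)\approx2.8\times10^3\), and they fall below \(10^{-8}\) by \(k\approx20\). I would therefore take \(N=20\) (or \(N=25\) for comfort), which leaves both truncation errors many orders of magnitude below the slack.

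The final step is to clear denominators. Multiply both sides by \(\bigl(N!\,(N+2)!\bigr)^2\) times the denominator of \(1/(1-q)\), so that every quantity becomes an integer, and verify \(10000\,U<9487\,L\) as a comparison of two explicit integers. An alternative that avoids the tail bound for \(I_0\) is to combine everything into \(10000\,I_1-9487\,I_0=\sum_k \frac{25^k}{(k!)^2}\bigl(\frac{50000}{k+1}-9487\bigr)\). Its terms are negative for \(k\ge5\), so truncating after a finite block gives a valid upper bound, and one only needs that block's sum to be negative.

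I expect no conceptual obstacle. The main burden is the exact big-integer bookkeeping: choosing \(N\) so that the inequality is comfortably decided, and presenting the resulting integers compactly enough to be checkable. I would first try \(N=20\) via the combined-series form, since it needs only one truncation and a sign check.
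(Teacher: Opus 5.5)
Your proposal is correct and is essentially the paper's proof. With \(N=20\), your bound \(q=25/(22\cdot23)=25/506\) and tail factor \(1/(1-q)=506/481\) reproduce the paper's \(U_1\) exactly, the decisive step is the same exact comparison \(9487L_0-10000U_1>0\), and (ii) is the same direct evaluation.

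Your combined-series variant is also valid and slightly leaner. The terms of \(\sum_k\frac{25^k}{(k!)^2}\bigl(\frac{50000}{k+1}-9487\bigr)\) are negative for \(k\ge5\), so any truncation with \(N\ge4\) is already an upper bound and Lemma~\ref{lem:tail} is not needed.
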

\begin{proof}
From \eqref{eq:I-series},
\begin{equation}\label{eq:appendix-bessel-series}
I_0(10)=\sum_{k=0}^{\infty}\frac{25^k}{(k!)^2},
\qquad
I_1(10)=\sum_{k=0}^{\infty}\frac{5\cdot25^k}{k!\,(k+1)!}.
\end{equation}
Both series have positive rational terms. Set
\begin{equation}\label{eq:appendix-L0}
L_0=\sum_{k=0}^{20}\frac{25^k}{(k!)^2}<I_0(10).
\end{equation}
For the \(I_1\) terms, for \(k\ge21\),
\begin{equation}\label{eq:appendix-tail-ratio}
\frac{t_{k+1}}{t_k}
=\frac{25}{(k+1)(k+2)}
\le \frac{25}{22\cdot23}
=\frac{25}{506}.
\end{equation}
Therefore Lemma~\ref{lem:tail} gives
\begin{equation}\label{eq:appendix-U1}
I_1(10)\le U_1:=
\sum_{k=0}^{20}\frac{5\cdot25^k}{k!\,(k+1)!}
+\frac{5\cdot25^{21}}{21!\,22!}\cdot\frac{506}{481}.
\end{equation}
The exact rational comparison is
\begin{equation}\label{eq:appendix-integer-witness}
9487L_0-10000U_1
=
\frac{9066027287067976494607015156102735792727}
{3214199764297248897429588860517482496}>0.
\end{equation}
Hence
\begin{equation}\label{eq:appendix-ratio-certificate}
\frac{I_1(10)}{I_0(10)}
<\frac{U_1}{L_0}
<\frac{9487}{10000},
\end{equation}
which proves (i). For (ii), direct rational evaluation gives
\begin{equation}\label{eq:appendix-h-value}
h\!\left(\frac{9487}{10000}\right)
=\frac{199}{200}-\frac{9487}{100000}
-\left(\frac{9487}{10000}\right)^2
=\frac{9831}{10^8}.
\end{equation}
\end{proof}

\begin{remark}[Decimal orientation]\label{rem:appendix-decimals}
For orientation only,
\begin{equation}\label{eq:orientation-decimals}
\begin{aligned}
I_0(10)&=2815.7166\ldots,\qquad
I_1(10)=2670.9883\ldots,\\
r_0(10)&=0.948599\ldots,\qquad
g_0''(10)=2.99\ldots\times10^{-4}.
\end{aligned}
\end{equation}
These decimals are not used in the proofs.
\end{remark}

\section*{Disclosure}

The author declares no competing interests and received no specific funding
for this work. No empirical data were used. Every numerical claim reduces to
finite exact rational arithmetic, reproducible from the formulas in
Appendix~\ref{app:certificates}.
During manuscript preparation, the author used ChatGPT and Codex to support
literature triage, organization, language revision, and consistency checks;
the author reviewed the content, verified the mathematical claims and cited
sources, and takes full responsibility for the manuscript.


\begin{thebibliography}{99}

\bibitem{amos}
D.~E.~Amos,
\emph{Computation of modified Bessel functions and their ratios},
Math.\ Comp.\ \textbf{28} (1974), 239--251.

\bibitem{baricz-bounds}
\'A.~Baricz,
\emph{Bounds for modified Bessel functions of the first and second kinds},
Proc.\ Edinb.\ Math.\ Soc.\ \textbf{53} (2010), 575--599.

\bibitem{baricz-turan-modified}
\'A.~Baricz,
\emph{Tur\'an type inequalities for modified Bessel functions},
Bull.\ Aust.\ Math.\ Soc.\ \textbf{82} (2010), 254--264.

\bibitem{biernacki-krzyz}
M.~Biernacki and J.~Krzy\.z,
\emph{On the monotonicity of certain functionals in the theory of analytic
functions},
Ann.\ Univ.\ Mariae Curie-Sk\l odowska Sect.\ A \textbf{9} (1955), 135--147.

\bibitem{bpv}
\'A.~Baricz, S.~Ponnusamy, and M.~Vuorinen,
\emph{Functional inequalities for modified Bessel functions},
Expo.\ Math.\ \textbf{29} (2011), 399--414, arXiv:1009.4814.
\url{https://arxiv.org/abs/1009.4814}.

\bibitem{grosswald}
E.~Grosswald,
\emph{The Student \(t\)-distribution of any degree of freedom is infinitely
divisible},
Z.\ Wahrscheinlichkeitstheorie verw.\ Gebiete \textbf{36} (1976), 103--109.

\bibitem{hausdorff}
F.~Hausdorff,
\emph{Summationsmethoden und Momentfolgen. I},
Math.\ Z.\ \textbf{9} (1921), 74--109.

\bibitem{hartman-watson}
P.~Hartman and G.~S.~Watson,
\emph{``Normal'' distribution functions on spheres and the modified Bessel
functions},
Ann.\ Probab.\ \textbf{2} (1974), 593--607.

\bibitem{hornik-gruen-2013}
K.~Hornik and B.~Gr\"un,
\emph{Amos-type bounds for modified Bessel function ratios},
J.\ Math.\ Anal.\ Appl.\ \textbf{408} (2013), 91--101.

\bibitem{hornik-gruen-2024}
K.~Hornik and B.~Gr\"un,
\emph{Generalized Amos-type bounds for modified Bessel function ratios},
Math.\ Inequal.\ Appl.\ \textbf{27} (2024), 775--787.
\url{https://doi.org/10.7153/mia-2024-27-55}.

\bibitem{ifantis-siafarikas}
E.~K.~Ifantis and P.~D.~Siafarikas,
\emph{Inequalities involving Bessel and modified Bessel functions},
J.\ Math.\ Anal.\ Appl.\ \textbf{147} (1990), 214--227.

\bibitem{ince}
E.~L.~Ince,
\emph{Ordinary Differential Equations},
Dover, New York, 1956.

\bibitem{ismail}
M.~E.~H.~Ismail,
\emph{Integral representations and complete monotonicity of various
quotients of Bessel functions},
Canad.\ J.\ Math.\ \textbf{29} (1977), 1198--1207.

\bibitem{ismail-1990}
M.~E.~H.~Ismail,
\emph{Complete monotonicity of modified Bessel functions},
Proc.\ Amer.\ Math.\ Soc.\ \textbf{108} (1990), 353--361.
\url{https://doi.org/10.1090/S0002-9939-1990-0993753-9}.

\bibitem{ismail-kelker}
M.~E.~H.~Ismail and D.~H.~Kelker,
\emph{Special functions, Stieltjes transforms and infinite divisibility},
SIAM J.\ Math.\ Anal.\ \textbf{10} (1979), 884--901.

\bibitem{joshi-bissu}
C.~M.~Joshi and S.~K.~Bissu,
\emph{Some inequalities of Bessel and modified Bessel functions},
J.\ Aust.\ Math.\ Soc.\ Ser.\ A \textbf{50} (1991), 333--342.

\bibitem{laforgia-natalini}
A.~Laforgia and P.~Natalini,
\emph{Some inequalities for modified Bessel functions},
J.\ Inequal.\ Appl.\ \textbf{2010} (2010), Article ID 253035, 10 pp.

\bibitem{mao-tian-k}
Z.-X.~Mao and J.-F.~Tian,
\emph{Monotonicity and complete monotonicity of some functions involving the
modified Bessel functions of the second kind},
Comptes Rendus Math\'ematique \textbf{361} (2023), 217--235.
\url{https://doi.org/10.5802/crmath.399}.

\bibitem{nasell}
I.~N\r{a}sell,
\emph{Rational bounds for ratios of modified Bessel functions},
SIAM J.\ Math.\ Anal.\ \textbf{9} (1978), 1--11.

\bibitem{nanthanasub}
T.~Nanthanasub, B.~Novaprateep, and N.~Wichailukkana,
\emph{The logarithmic concavity of modified Bessel functions of the first kind
and its related functions},
Adv. Difference Equ. \textbf{2019} (2019), Paper No.~379.
\url{https://doi.org/10.1186/s13662-019-2309-8}.

\bibitem{dlmf}
F.~W.~J.~Olver, D.~W.~Lozier, R.~F.~Boisvert, and C.~W.~Clark (eds.),
\emph{NIST Handbook of Mathematical Functions},
Cambridge University Press, 2010.

\bibitem{schilling-song-vondracek}
R.~L.~Schilling, R.~Song, and Z.~Vondra\v{c}ek,
\emph{Bernstein Functions: Theory and Applications},
2nd ed., de Gruyter, 2012.

\bibitem{robert}
C.~Robert,
\emph{Modified Bessel functions and their applications in probability and
statistics},
Statist.\ Probab.\ Lett.\ \textbf{9} (1990), 155--161.

\bibitem{ruiz-antolin-segura}
D.~Ruiz-Antol\'in and J.~Segura,
\emph{A new type of sharp bounds for ratios of modified Bessel functions},
J.\ Math.\ Anal.\ Appl.\ \textbf{443} (2016), 1232--1246.

\bibitem{segura}
J.~Segura,
\emph{Bounds for ratios of modified Bessel functions and associated
Tur\'an-type inequalities},
J.\ Math.\ Anal.\ Appl.\ \textbf{374} (2011), 516--528.

\bibitem{segura-2021}
J.~Segura,
\emph{Monotonicity properties for ratios and products of modified Bessel
functions and sharp trigonometric bounds},
Results Math.\ \textbf{76} (2021), Article No.~221.
\url{https://doi.org/10.1007/s00025-021-01531-1}.

\bibitem{simpson-spector}
H.~C.~Simpson and S.~J.~Spector,
\emph{Some monotonicity results for ratios of modified Bessel functions},
Quart.\ Appl.\ Math.\ \textbf{42} (1984), 95--98.

\bibitem{turan}
P.~Tur\'an,
\emph{On the zeros of the polynomials of Legendre},
\v{C}asopis P\v{e}st. Mat. Fys. \textbf{75} (1950), 113--122.

\bibitem{watson}
G.~N.~Watson,
\emph{A Treatise on the Theory of Bessel Functions},
2nd ed., Cambridge University Press, 1944.

\bibitem{widder}
D.~V.~Widder,
\emph{The Laplace Transform},
Princeton University Press, 1946.

\bibitem{yang-chu-k}
Z.-H.~Yang and Y.-M.~Chu,
\emph{Monotonicity and inequalities involving the modified Bessel functions of
the second kind},
J.\ Math.\ Anal.\ Appl.\ \textbf{508} (2022), Article No.~125889, 23 pp.

\bibitem{yang-tian}
Z.-H.~Yang and J.-F.~Tian,
\emph{Convexity of ratios of the modified Bessel functions of the first kind
with applications},
Rev.\ Mat.\ Complut.\ \textbf{36} (2023), 799--825.
\url{https://doi.org/10.1007/s13163-022-00439-w}.

\bibitem{yang-zheng-i}
Z.-H.~Yang and S.-Z.~Zheng,
\emph{Monotonicity and convexity of the ratios of the first kind modified
Bessel functions and applications},
Math.\ Inequal.\ Appl.\ \textbf{21} (2018), 107--125.

\bibitem{yang-zheng-k}
Z.-H.~Yang and S.-Z.~Zheng,
\emph{The monotonicity and convexity for the ratios of modified Bessel
functions of the second kind and applications},
Proc.\ Amer.\ Math.\ Soc.\ \textbf{145} (2017), 2943--2958.

\end{thebibliography}
\end{document}